\documentclass{amsart}

\usepackage[utf8]{inputenc}
\usepackage{todonotes}
\usepackage{amsmath,amssymb,amsthm,units, stmaryrd,stackrel,relsize,bm}
\usepackage{amsmath,amssymb,units, stmaryrd,stackrel,relsize,bm}
\usepackage{enumitem}
\usepackage{mathdots}
\usepackage[T1]{fontenc}
\usepackage{accents}
\usepackage{hyperref}
\usepackage{tikz,tikz-cd}
\usetikzlibrary{calc,arrows, decorations.pathmorphing, matrix,fit}
\usepackage{float}
\newtheorem{theorem}{Theorem}

\newtheorem{lemma}[theorem]{Lemma}

\newtheorem{claim}[theorem]{Claim}

\newtheorem{corollary}[theorem]{Corollary}

\theoremstyle{definition}
\newtheorem{definition}[theorem]{Definition}

\theoremstyle{remark}

\newcommand\Ord{{\mathsf{Ord}}}

\newcommand\glp{\mathsf{GLP}}
\newcommand\gl{\mathsf{GL}}

\newcommand{\tup}[1]{\langle #1\rangle}
\newcommand{\set}[1]{\{ #1\}}
\newcommand{\sub}[1]{\mathrm{sub}(#1)}
\newcommand{\vars}{\mathsf{Vars}}
\newcommand{\inter}[1]{\llbracket#1\rrbracket}

\newcommand{\lang}{\mathcal{L}}

\makeatletter
\def\Ddots{\mathinner{\mkern1mu\raise\p@
\vbox{\kern7\p@\hbox{.}}\mkern2mu
\raise4\p@\hbox{.}\mkern2mu\raise7\p@\hbox{.}\mkern1mu}}
\makeatother
\let\temp\phi
\let\phi\varphi
\let\varphi\temp

\let\temp\epsilon
\let\epsilon\varepsilon
\let\varepsilon\temp

\begin{document}

%\begin{frontmatter}
  \title{Strong completeness of the logic J}
  \author{J. P. Aguilera}
  \address{J. P. A. Institute of Discrete Mathematics and Geometry, Vienna University of Technology. \\\
  Wiedner Hauptstra{\ss}e 8--10, 1040 Vienna, Austria.}
  \email{aguilera@logic.at}
  \author{G. Stepanov}
\address{G. S. Institute of Discrete Mathematics and Geometry, Vienna University of Technology. \\\
  Wiedner Hauptstra{\ss}e 8--10, 1040 Vienna, Austria.}
  \email{grigorii.stepanov@tuwien.ac.at}

\begin{abstract}
  We prove that the polymodal logic $\mathsf{J}$ is strongly complete with respect to \textit{$\mathsf{J}$-bouquets}, a topological refinement of its Kripke semantics. 
In particular, it is strongly topologically complete. This yields the following completeness result for the provability logic $\glp$: a countable set of formul\ae\  $\Gamma$ is consistent with $\glp$ if and only if there is a $\mathsf{J}$-bouquet $B$ and $r\in B$ such that $B, r\Vdash \glp$ and $B, r\Vdash\Gamma$.
In contrast, 
we exhibit counterexamples showing that $\glp$ is not strongly complete with respect to Beklemishev-Gabelaia spaces.
\end{abstract}

\keywords{
  Provability logic, strong completeness, Ignatiev frame, GLP}
  \date{\today}
  \maketitle
%\end{frontmatter}

%\begin{document}

\numberwithin{equation}{section}
\setcounter{tocdepth}{1}
\tableofcontents

\section{Introduction}
 The interest in provability logic stems from the investigations of Gödel's incompleteness theorems.
 L\"ob \cite{loeb} formulated three conditions on the provability predicate of Peano Arithmetic
 that form a useful modification of the  conditions that Hilbert and Bernays \cite{hilber} introduced for their proof of Gödel’s second incompleteness theorem. 
 Friedman \cite{Fr75} posed the problem of axiomatizing the set of valid arithmetical formul\ae\  built from expressions of the form ``$\phi$ is provable'' by means of Boolean connectives and provability assertions.  Building on work of Segerberg \cite{Se71} on the Kripke semantics of L\"ob's logic $\mathsf{GL}$, Boolos \cite{Boo75} (and independently Bernardi, Montagna, and van Benthem) proved that L\"ob's axiomatization was arithmetically complete when restricting to closed (i.e., variable-free) formul\ae. Solovay \cite{Solovay1976} later extended Boolos' theorem to a completeness theorem of  $\mathsf{GL}$ for its arithmetical interpretation.
 
We consider a propositional modal logic with infinitely many modalities ($[n]$ and its dual $\tup{n}$ for $n<\omega$). We call this language $\mathcal{L}$ and its restriction to the first $n$ modalities $\mathcal{L}_n$.
The intended interpretation of $[n]\phi$ is ``$\phi$ is provable in the formal system $T_n$,'' where $T_n$ is an arithmetical or set-theoretic system such that $T_{n+1}$ is stronger than $T_n$ for each $n$. We shall not concern ourselves with the arithmetical interpretation of this polymodal provability logic here, however, and instead focus on its relational and topological semantics (see below). This polymodal extension of $\gl$ -- called $\glp$ -- was introduced by Japaridze \cite{Ja88} and is complete with respect to various choices of $T_n$; see e.g., Japaridze \cite{Ja88} or Fernandez-Duque and Joosten \cite{FDJo18}. Beklemishev \cite{BekWorms} and Beklemishev and Pakhomov \cite{BePa22} show how $\glp$ can be applied for the purposes of ordinal analysis and other proof-theoretic results. The first author and Pakhomov \cite{AgPa24} have also proved the completeness of the extension $\glp.3$ of $\glp$ for a set-theoretic interpretation, extending a result of Solovay \cite{Solovay1976} for the unimodal case.

Much work has been carried out on the models of $\gl$ and $\glp$, in part due to the fact that these are generally complicated. For instance, it is not difficult to show that $\glp$ has no nontrivial Kripke frames and in particular is not Kripke-complete. There are three ways around this problem. The first is to restrict to fragments of $\glp$, such as the closed (variable-free) fragment of $\glp$. The second is to consider topological models rather than relational models. The third is to consider relational $\glp$-models, rather than $\glp$-frames. These last two approaches can further be combined, by considering topological $\glp$-models based on spaces which need not validate $\glp$, and this approach is what motivates the definition of the logic $\mathsf{J}$.

 \begin{definition}\label{def:J}
    The logic $\mathsf{J}$ is the $\lang$-logic whose rules are modus ponens and necessitation and whose axioms are:
  \begin{enumerate}[label=(\roman*)]
        \item All Boolean tautologies
        \item $[n](\phi\to\psi)\to([n]\phi\to[n]\psi)$;\label{ax:j-2}
        \item $[n]([n]\phi\to\phi)\to[n]\phi$ for all $n<\omega$;\label{ax:j-3}
        \item $[m]\phi\to[n][m]\phi$ for all $m\le n<\omega$;\label{ax:j-4}
        \item $\tup{m}\phi\to[n]\tup{m}\phi$ for all $m<n<\omega$;\label{ax:j-5}
        \item $[m]\phi\to[m][n]\phi$ for $m\le n < \omega$;\label{ax:j-6}
      \end{enumerate}
    If we substitute \ref{ax:j-6} with $[m]\phi\to[n]\phi$ for all $m\le n < \omega$, then we get the logic $\glp$. Note that $[m]\phi\to[n]\phi$ entails \ref{ax:j-6}, so $\mathsf{J} \subset \glp$.
\end{definition}
%In other words, it is the result of adding the schema $[m]\phi\to[m][n]\phi$ ($m\le n$) to $\Ig$ (cf. Definition~\ref{def:logic-I}). 
%We denote by $J_n$ the result of defining the logic $J$ with the language restricted to the modalities $\{[i]: i <n\}$ only. The logic $\glp$ is obtained from $J$ by adding the monotonicity axioms $[n] \varphi \to [m]\varphi$, for $n<m$.
$\mathsf{J}$ was introduced by Beklemishev \cite{Bek}, who proved that the logic is Kripke-complete, with respect to a type of Kripke model called a \textit{$\mathsf{J}$-tree}. The purpose of this article is to study the strong completeness of $\mathsf{J}$.

\begin{definition}
A logic $\mathsf{L}$ is \textit{strongly complete} with respect to a class of models $\mathcal{C}$ if every consistent set of $\mathsf{L}$-formul\ae\  has a model in $\mathcal{C}$.
\end{definition}
Here recall that a set of formul\ae\  $\Gamma$ is consistent relative to a logic $\mathsf L$ if $\mathsf L \not\vdash \neg \bigwedge \Delta$ for any finite $\Delta\subset\Gamma$.

It is well known that $\mathsf{GL}$ is not strongly complete with respect to its Kripke semantics. 
It follows from this that $\mathsf{J}$ is not strongly complete with respect to $\mathsf{J}$-trees. However, we show that it is strongly complete with respect to  \textit{$\mathsf{J}$-bouquets} -- slight variants of $\mathsf{J}$-trees with a modified satisfaction relation at root-like points. We prove:

\begin{theorem}\label{TheoremJIntro}
$\mathsf{J}$ is strongly complete with respect to the class of $\mathsf{J}$-bouquets.
\end{theorem}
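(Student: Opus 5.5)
We will prove the theorem in two stages. First we treat the unimodal fragment, establishing strong completeness of $\gl$ for a topological refinement of its finite-tree semantics. Then we lift this fibrewise through Beklemishev's stratification of $\mathsf{J}$-trees.

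Since $\mathsf{J}$-bouquets have not yet been formally defined at this point, we fix the intended reading. A $\mathsf{J}$-bouquet is a $\mathsf{J}$-tree-like structure in which certain root-like points carry a neighbourhood filter. At such a point $r$, $\tup{n}\phi$ holds iff $\phi$ is satisfied at points in every neighbourhood of $r$ along the $n$-th relation. Equivalently, $r$ is a limit of the successors, as in the ordinal topology on $\omega+1$ where $\omega$ is the limit of its predecessors.

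\textbf{Step 1 (unimodal case).} Let $\Gamma$ be $\gl$-consistent and countable. We extend $\Gamma$ to a maximal consistent set $\Gamma^*$. For each finite $\Delta\subset\Gamma^*$ (enumerated as $\Delta_0\subset\Delta_1\subset\cdots$), Kripke completeness of $\gl$ gives a finite irreflexive tree $T_k$ with root $r_k\Vdash\bigwedge\Delta_k$. We will arrange these with a closure-under-subformulas argument, so that $r_k$ agrees with $\Gamma^*$ on the subformulas of $\Delta_k$. The bouquet is formed by gluing a new root $r$ whose successors are the disjoint union of the $T_k$, with neighbourhoods of $r$ given by tails $\bigcup_{j\ge k}T_j$. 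We then check by induction on formulas that $r\Vdash\phi$ iff $\phi\in\Gamma^*$. The diamond case works because $\tup{0}\phi\in\Gamma^*$ iff $\tup{0}\phi\in\Delta_j$ for cofinally many $j$, iff $\phi$ holds in every tail. Löb's axiom remains valid at $r$ because this is the standard scattered topology.

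\textbf{Step 2 (polymodal).} We follow Beklemishev's proof that $\mathsf{J}$ is complete for $\mathsf{J}$-trees. That proof builds the tree by induction on $n$: an $R_n$-tree is constructed and each point is then replaced by an $R_{n+1}$-tree, under axioms \ref{ax:j-4}–\ref{ax:j-6}, which force $R_m$-successors to be inherited along $R_n$-clusters for $m<n$. We perform the same construction with the Step 1 bouquet construction used at each level. Starting from a maximal $\mathsf{J}$-consistent $\Gamma^*$, we produce the root $r$ with neighbourhood systems for each modality. The $[m]$-part of each finite approximation is realised by finite $\mathsf{J}$-trees from Beklemishev's theorem applied to $\bigwedge\Delta_k$. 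Since $\Gamma$ is countable, only countably many approximations are needed.

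\textbf{Main obstacle.} The main obstacle is the coherence between levels. The truth of $\tup{m}\phi$ at $r$ must be witnessed in the same tails that witness $\tup{n}\phi$, and axioms \ref{ax:j-5} and \ref{ax:j-6} must hold at the limit point. Here a single neighbourhood filter shared by all modalities will not suffice. We will first try giving $r$ a separate filter $\mathcal F_n$ for each $n$, built from a diagonal enumeration of the pairs $(k,n)$. We then verify \ref{ax:j-6} by checking that $R_m$-successors of $r$ lie inside $R_n$-closed copies of trees whose roots agree with $\Gamma^*$ on $[n]$-formulas. If this fails, the fallback is to derive strong completeness via a compactness argument on the (compact) space of bouquets over the countable set of finite approximations.
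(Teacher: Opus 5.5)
Your Step 1 already fails, and the failure is at the key idea of the construction. You let the roots $r_k$ of the glued trees realize finite pieces $\Delta_k$ of $\Gamma^*$, so that they ``agree with $\Gamma^*$''. But the $r_k$ are $0$-successors of $r$ lying in every tail, not copies of $r$.

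Take $\Gamma=\{p,[0]\neg p,\tup{0}\top\}$, which is $\gl$-consistent. For all large $k$ you get $r_k\Vdash p$, hence $r\Vdash\tup{0}p$, although $[0]\neg p\in\Gamma^*$. In your diamond case, the step from ``$\phi$ holds somewhere in every tail'' to $\tup{0}\phi\in\Gamma^*$ breaks, because $\phi$ may hold at the roots $r_j$ themselves.

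The roots must instead realize finite sets $\Sigma$ with $\Gamma^*\vdash\tup{0}\bigwedge\Sigma$:
\begin{itemize}
\item enumerate the $\psi$ with $\tup{0}\psi\in\Gamma^*$ as $\psi_i$, each repeated infinitely often;
\item enumerate the $\phi$ with $[0]\phi\in\Gamma^*$ as $\phi_j$;
\item let $T_i$ have a root satisfying $\{\psi_i\}\cup\{\phi_j\wedge[0]\phi_j : j<i\}$.
\end{itemize}
This set is consistent, since $\tup{0}\psi\wedge[0]\phi\to\tup{0}(\psi\wedge\phi\wedge[0]\phi)$ follows from $[0]\phi\to[0][0]\phi$. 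The conjunct $\phi_j$ at the root is exactly what makes the box case go through. This is the paper's $\Gamma'(i)$.

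Step 2 contains no actual construction, and the obstacle you name is not the real one. Validity of $\mathsf{J}$ at $r$, including axiom (vi), is automatic once the structure is a $\mathsf{J}$-bouquet, because $\mathsf{J}$-bouquets are $\mathsf{J}$-spaces. The per-modality neighbourhoods are already determined by the relations $<_i$, so the choice of filters is not where the difficulty lies.

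The difficulty is relational. By condition (2) of a $\mathsf{J}$-frame, placing $T_i$ $<_{n_i}$-above $r$ forces $r$ and the root of $T_i$ to have the same $<_m$-successors for every $m<n_i$. Condition (3) forces further arrows; this is the closure (\#) in the paper. These arrows give $r$ new lower-modality successors from every $T_i$, and give points of $T_i$ successors outside $T_i$. So neither the truth lemma at $r$ nor truth inside $T_i$ survives unless the set realized at the root of $T_i$ is strengthened.

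The paper strengthens it by adding:
\begin{itemize}
\item $[n_j]\phi_j\wedge\tup{n_j}\psi_j$ for $n_j<n_i$, so the root of $T_i$ mimics $r$ on lower modalities;
\item $[m]\phi_j\wedge[n_j][m]\phi_j$ for $n_j\le m\le m_i$;
\item $[k]\bot$ for $k\in N^\bot$ with $k<m_i$.
\end{itemize}
It then proves $\Gamma\vdash\tup{n_i}\bigwedge\Gamma(i)$ using axioms (iv)--(vi). Finally it shows that for $i$ beyond a threshold depending on $\phi$, subformulas of $\phi$ have the same truth value at points of $T_i$ whether evaluated in $T_i$ or in the amalgam $B$.

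Your compactness fallback does not replace this. You specify no compact space of bouquets in which truth at the root is a closed condition. Moreover, limits of well-founded frames need not be well-founded, which is precisely why $\gl$ fails strong Kripke completeness.
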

Theorem \ref{TheoremJIntro} is a natural polymodal extension of the unimodal result from \cite{AG17}
and yields the following completeness theorem for $\glp$: if $\Gamma$ is a consistent set of formul\ae, then $\Gamma$ holds at a point of a $\mathsf{J}$-bouquet where all axioms of $\glp$ hold. 

Formally, $\mathsf{J}$-bouquets can be defined as a particular type of $\mathsf{J}$-spaces. These are defined as the spaces in which $\mathsf{J}$ is valid, although we shall give a combinatorial characterization of these of independent interest.
\begin{corollary}\label{CorollaryJIntro}
$\mathsf{J}$ is strongly complete with respect to the class of $\mathsf{J}$-spaces.
\end{corollary}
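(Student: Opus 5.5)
The plan is to derive Corollary~\ref{CorollaryJIntro} directly from Theorem~\ref{TheoremJIntro}, using that every $\mathsf{J}$-bouquet is a $\mathsf{J}$-space. A $\mathsf{J}$-space is a polytopological space (one topology $\tau_n$ for each modality $[n]$, with $\tup{n}$ read as the $\tau_n$-derived set operator) in which every theorem of $\mathsf{J}$ is valid. As announced in the introduction, a $\mathsf{J}$-bouquet is to be formally defined as a particular type of $\mathsf{J}$-space. If that is how the definition is set up, the corollary is immediate. Let $\Gamma$ be $\mathsf{J}$-consistent. Theorem~\ref{TheoremJIntro} gives a $\mathsf{J}$-bouquet $B$, a valuation, and a point $r$ with $B,r\Vdash\Gamma$. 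Since $B$ is a $\mathsf{J}$-space, this is a model of $\Gamma$ in the required class.

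If instead bouquets are first presented combinatorially, as $\mathsf{J}$-trees with a modified satisfaction clause at root-like points, then I have to supply the topology and check validity myself. I would proceed in three steps.

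\textbf{Step 1: define the topologies.} For each $n$, let $\tau_n$ be the topology whose open sets are the sets that are upward closed with respect to the relations $R_k$ for $k\le n$, except near a root-like point. There, a neighbourhood must additionally contain a cofinal (tail) segment of the branches accumulating at that point. This mirrors the unimodal construction of \cite{AG17}.

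\textbf{Step 2: match the semantics.} I would check that the derived-set operator of $\tau_n$ agrees with the bouquet satisfaction clause for $\tup{n}$, by induction on formulas. The key point is that $\tup{n}$ at an ordinary point is the relational clause, because the $\tau_n$-derived set there coincides with the $R_n$-successor condition. At a root-like point, $\tup{n}$ is the limit clause, because of the neighbourhood convention just described. With this, the bouquet model and the topological model satisfy the same formulas at every point, so $B,r\Vdash\Gamma$ transfers to the space.

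\textbf{Step 3: verify the axioms.} Finally I would check that each axiom schema of Definition~\ref{def:J} is valid in every such space.
\begin{enumerate}[label=(\alph*)]
\item Axiom \ref{ax:j-2} and the rules (modus ponens, necessitation) hold in any topological semantics.
\item Axiom \ref{ax:j-3} (Löb) holds because each $\tau_n$ is scattered.
\item Axioms \ref{ax:j-4}--\ref{ax:j-6} translate into conditions relating $\tau_m$ and $\tau_n$ for $m\le n$. These follow from the corresponding frame conditions on $\mathsf{J}$-trees, for example that $R_n$-successors of $R_m$-accessible points behave correctly, together with a check at the limit points.
\end{enumerate}
I expect Step 2 (and the limit-point case of Step 3) to be the main obstacle. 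The difficulty is showing that the limit neighbourhoods give scattered topologies that still satisfy axiom \ref{ax:j-5}, which requires checking that $\tup{m}$-facts are preserved across $\tau_n$-neighbourhoods at root-like points. Once these checks are done, the corollary again follows immediately from Theorem~\ref{TheoremJIntro}.
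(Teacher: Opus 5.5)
Your outer argument is the paper's: Theorem~\ref{thm:bouquet-n} yields a small $\mathsf{J}$-bouquet $B$ with $B,r^B\Vdash\Gamma$, and the corollary follows because $\mathsf{J}$-bouquets are $\mathsf{J}$-spaces. The gap is in how you justify that last fact. In the paper it is not true by definition. Definition~\ref{def:bouquet} fixes a concrete polytopology: $\tau_i$ is generated by $<_i$-cones, together with the tail neighbourhoods $\{x\}\cup\bigcup_{l\ge l^*}U_l$ at points with infinitely many immediate $<_i$-successors. Satisfaction is the topological one, and validity of $\mathsf{J}$ is a separate result, Lemma~\ref{LemmaJBouquetsAreJSpaces}. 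So your ``immediate'' branch does not apply, and everything rests on your fallback, which fails as written.

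In Step~1 you take the $\tau_n$-open sets to be those upward closed under every $<_k$ with $k\le n$. That is the wrong topology. Suppose $x<_0 y$, $y\Vdash p$, and $x$ has no $<_1$-successors. Then every $\tau_1$-open set containing $x$ contains $y$, so $x\Vdash\tup{1}p$ in your space, while $x\Vdash[1]\bot$ in the bouquet. Hence Step~2's matching claim breaks.

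The topology has to be generated by $<_n$ alone, exactly as in Definition~\ref{def:bouquet}. With that topology, Step~2 is vacuous, since Lemma~\ref{LemmaJBouquetChar} only rephrases the topological clause. What remains is Step~3(c) at the infinitely branching points, and this is precisely what you leave open. The paper does it via Theorem~\ref{PropositionCharJSpaces} rather than axiom by axiom:
\begin{itemize}
\item The key condition is \eqref{eq:j-space-2}, $d_{\tau_m}A\in\tau_n$ for $m<n$, which is the Axiom~\ref{ax:j-5} issue you flagged. It follows directly from condition~\eqref{eq:j-tree-2} of Definition~\ref{def:j-tree}. If $x<_n y$ then ${<_m}(x)={<_m}(y)$, so $x$ and $y$ have the same immediate $<_m$-successors. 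Consequently the basic $\tau_m$-neighbourhoods of $y$ are exactly the sets $\{y\}\cup(U\setminus\{x\})$ with $U$ a basic $\tau_m$-neighbourhood of $x$, tail neighbourhoods included. Hence $d_{\tau_m}A$ is $<_n$-upward closed, and therefore $\tau_n$-open.
\item Scatteredness comes from converse well-foundedness.
\item The splitting condition~\eqref{eq:j-space-3} is obtained by decomposing a $\tau_m$-open set along its $<_m$-minimal points.
\end{itemize}
Once these are supplied, or an axiom-by-axiom check built on the same observation, your plan goes through.
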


A long history of topological models for provability logic started with Esakia \cite{esakia}, who noticed the resemblance between the behaviour of the modal operator $\Diamond$ with that of the derivative operator in topologies and established completeness of $\gl$ for scattered spaces. The topological approach is especially fruitful for the polymodal generalizations of $\gl$, now extended by Corollary \ref{CorollaryJIntro} implies that $\glp$ is strongly complete for its general topological semantics (see Corollary \ref{CorollaryGLPStrongComp}).

\textit{Icard spaces} were studied by  Icard \cite{Ic08,Ic11} as a variant of the well known frame $\mathfrak{I}$ of Ignatiev \cite{IG} and provide a simple semantics for the closed fragment of $\glp$. 
Ignatiev's frame was later generalized into the blow-up constructions of Beklemishev \cite{Bek} and 
Beklemishev and Gabelaia \cite{BG13} developed the notion of a \textit{Beklemishev-Gabelaia-space} (or \textit{BG-space}) and proved that $\glp$ is complete with respect to these. Fern\'andez-Duque \cite{FD14} later extended this fact to the extension of $\glp$ equipped with transfinite modalities, as well as to general topological frames based on Icard spaces. Beklemishev and Wang \cite{BW24} have recently exhibited a simple class of \textit{periodic} topological frames which suffices for the completeness of $\glp$. 
The authors showed in \cite{AS24} that the closed fragment of $\glp$ is strongly complete with respect to a slight extension of the Ignatiev frame $\mathfrak{I}$ which includes its $\epsilon_0$th level, and exhibited counterexamples showing that both extension and the inclusion of points outside the main axis are necessary. However, Beklemishev has since pointed out that this result had been obtained earlier by Icard, and can be deduced from the results in Icard \cite[\S 3.1]{Icard2009}. 

The first author \cite{Ag22} had previously observed that $\glp$ is also not strongly complete with respect to its so-called ``standard topological models'' (see Beklemishev and Gabelaia \cite{BG14} and Bagaria \cite{Ba19} for more on these) and so at the current stage it is not clear whether there are any natural candidates for strongly complete (non-general) models. Nonetheless, Shamkanov \cite{Sh} has proved a nice \textit{global completeness} result making use of an illfounded proof system for $\glp$ and the global consequence relation, from which strong completeness for topological models follows, however it is open whether strong completeness holds for any tangible collection of ordinal or otherwise easily described spaces, and indeed this fails for what might otherwise be the first natural candidates, as shown in \S\ref{sec:conclusion}.

\subsection*{Acknowledgements}
This work was partially supported by the Austrian Science Foundation (FWF) through grants 10.55776/STA139, 10.55776/ESP3, and 10.55776/PAT2264325. The authors would like to acknowledge support from the Erwin Schr\"odinger Institute in Vienna during the thematic program \textit{Reverse Mathematics} in 2025, as well as to BIRS for support during the workshop \textit{Infinitary Proof Theory: Techniques and Applications} in 2025.

\section{Preliminaries}
We begin with some preliminary notions, definitions, and recall some relevant results. For general background on modal logic, we refer the reader to Blackburn, de Rijke, and Venema \cite{blackburn2001modal}.
For general background on provability logic, we refer the reader to Boolos \cite{Boolos}. 

We consider modal logic with infinitely many modalities $\{[n]: n\in\mathbb{N}\}$ as before. We write $\langle n \rangle = \lnot [n] \lnot$ for the dual operators.  In particular, we work with the logic $\mathsf{J}$ from Definition \ref{def:J}.
We denote by $\mathsf{J}_n$ the result of defining the logic $\mathsf{J}$ with the language restricted to the modalities $\{[i]: i <n\}$ only.

\begin{definition}\label{def:kripke}
    A {\it Kripke frame} is a tuple $F=(W,R_0, R_1,\dots)$, where $W$ is a set and 
    $R_i\subset W\times W$ for each $i<\omega$. Given a Kripke frame $F$ and a function $v:\vars\to \mathcal{P}(W)$,
    we say that $M=(F,v)$ is a {\it Kripke model}, which yields the following interpretation $\inter{\cdot}$ of modal formul\ae:
    \begin{itemize}
        \item $\inter{\bot} = \emptyset$;
        \item $\inter{p}=v(p)$, where $p\in\vars$;
        \item $\inter{\phi\land\psi} = \inter{\phi}\cap\inter{\psi}$;
        \item $\inter{\neg\phi} = W\setminus\inter{\phi}$;
        \item $\inter{\tup n\phi} = \set{x:\exists y\in\inter{\phi}\, xR_ny}$;
    \end{itemize}
We say that a formula $\phi$ holds at a point $x$ in a model $M$ if $x \in \inter{\phi}$, in which case we write $M,x \Vdash \phi$. We write $M \Vdash \phi$ to mean $M,x\Vdash \phi$ for some $x\in M$ and $M\models\phi$ to mean $M,x\Vdash\phi$ for all $x\in F$ and $F\models\phi$ to mean $M \models\phi$ for all models of the form $M = \langle F, v\rangle$ we might occasionally write $F,x\Vdash_v\phi$ instead of $(F,v),x\Vdash\phi$ or even omit the index, if $v$ is clear form the context.

\end{definition}

\subsection{Topological semantics}\label{subsec:top-sem}

We start with the definition of topological models for modal logic.

\begin{definition}
    Given a topological space $(X,\tau)$ for each $A\subset X$ we denote $d_\tau A = \set{x : \forall U\in\tau\exists y\ne x (y\in U\cap A)}$. We call $d_\tau$ the derivative operator. We omit the index if there's no risk of confusion.
\end{definition}

Topological spaces can be used to provide semantics for modal logics. In the polymodal case, we use \textit{polytopological spaces}, i.e., structures of the form $(X, \tau_i)_{i\in I}$ such that $(X,\tau_i)$ is a topological space for each $i$. The index set will always be either $\mathbb{N}$ or of the form $\{0,1,\hdots, n\}$ for some $n\in\mathbb{N}$.
\begin{definition}
    A \emph{topological model} for (poly)modal logic is a tuple $(X,\tau_i,v)_{i \in I}$, where $(X,\tau_i)_{i\in I}$ is a polytopological space and $v:\vars\to \mathcal{P}(X)$ is an interpretation. As before, $v$ is readily extended to arbitrary formul\ae:
    \begin{itemize}
        \item $ \inter{p} = v(p);$
        \item $\inter{\neg \phi} = X\setminus\inter{\phi}$; 
        \item $\inter{\phi\land\psi} = \inter{\phi}\cap\inter{\psi}$;
        \item $\inter{\tup{i}\phi} = d_{\tau_i}\inter{\phi}$;
    \end{itemize}
\end{definition}

Observe that Kripke frames can be regarded as topological spaces in which open sets are generated by \emph{cones} (i.e. upward closed sets), so the relational semantics is a particular case of topological semantics.

\begin{definition}
  Let $(X,\tau)$ be a scattered space. We define the \emph{rank function} $\rho_\tau:X\to\Ord$ by  $\rho_\tau(x) = \min\set{\alpha:x\notin d^{\alpha+1}X}$.
\end{definition}

Note that if $\tau$ is generated by the cones of a Kripke frame, then the notion of the topological rank coincide with the tree-rank.

\section{$\mathsf{J}$-spaces}
\begin{definition}
A polytopological space $(X, \vec \tau)$ is said to be a \textit{$\mathsf{J}$-space} if $\mathsf{J}$ is valid in $X$.
\end{definition}

The following result establishes a combinatorial characterization of $\mathsf{J}$-spaces.
\begin{theorem}\label{PropositionCharJSpaces}
Let $(X, \vec \tau)$ be a polytopological space.
 Then, $(X, \vec \tau)$ is a $\mathsf{J}$-space if and only if the following conditions hold for any $m < n < \omega$:
\begin{enumerate}
\item $\tau_m$ is scattered,\label{eq:j-space-1}
\item $d_{\tau_m} A \in \tau_{n}$ for all $A\subset X$, and \label{eq:j-space-2}
\item \label{eq:j-space-3} For all $A \in \tau_m$, there are disjoint $B_0, B_1 \in \tau_{n}$ such that the following hold: 
\begin{enumerate}
\item $A \subset B_0 \cup B_1$, \label{eq:j-space-3a}
\item $B_0 \subset A$,\label{eq:j-space-3b}
\item $B_0 \cup \{x\} \in \tau_m$ for all $x \in B_1$.\label{eq:j-space-3c}
\end{enumerate}
\end{enumerate}
\end{theorem}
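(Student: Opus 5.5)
The plan is to match each axiom of $\mathsf{J}$ with a frame condition, the same way Esakia's theorem handles $\gl$. Axioms (i), (ii) and the rules hold in every topological model. By Esakia's theorem, axiom \ref{ax:j-3} for a fixed $m$ is valid in $(X,\tau_m)$ exactly when $\tau_m$ is scattered; this gives condition \eqref{eq:j-space-1}. Scatteredness makes each $\tau_m$ $T_D$, so $d_m d_m A\subset d_m A$, and the instance $m=n$ of axiom \ref{ax:j-4} then holds automatically. The three remaining mixed axioms ($m<n$) must be shown jointly equivalent to conditions \eqref{eq:j-space-2} and \eqref{eq:j-space-3}. Throughout, the key fact is that validity of a one-variable schema is equivalent to an inclusion between set operators that holds for all $A\subset X$, since a valuation can send $p$ to any set $A$.

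First, write $\langle n\rangle$ as $d_n$ and $[n]$ as its dual interior-like operator $\tilde\imath_n(S)=X\setminus d_n(X\setminus S)$, so that $x\in\tilde\imath_n S$ iff some $\tau_n$-neighbourhood $U$ of $x$ has $U\setminus\{x\}\subset S$.
\begin{itemize}
\item Axiom \ref{ax:j-5} reads $d_m A\subset \tilde\imath_n d_m A$. This says every point of $d_m A$ has a $\tau_n$-punctured neighbourhood inside $d_mA$. Since $x\in d_mA$ itself, that is exactly $d_mA\in\tau_n$.
\item Axiom \ref{ax:j-4} reads $\tilde\imath_m S\subset\tilde\imath_n\tilde\imath_m S$. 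Taking complements, it says $d_m A$ is $\tau_n$-open for all $A$, which is the same condition.
\end{itemize}
So axioms \ref{ax:j-4} and \ref{ax:j-5} together are equivalent to \eqref{eq:j-space-2}. I would double-check the complement manipulation, since with punctured neighbourhoods the two axioms may differ slightly.

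The substantive part is axiom \ref{ax:j-6}: $\tilde\imath_m S\subset \tilde\imath_m\tilde\imath_n S$ for $m<n$. Pointwise, this says that if $U\in\tau_m$ contains $x$ and $U\setminus\{x\}\subset S$, then some $\tau_m$-neighbourhood $V$ of $x$ has every $y\in V\setminus\{x\}$ in $\tilde\imath_n S$. Equivalently, each such $y$ has a $\tau_n$-neighbourhood $W$ with $W\setminus\{y\}\subset S$.

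I plan to prove \eqref{eq:j-space-3} from this by applying the axiom with $S=A$ for a given $A\in\tau_m$. Then every $x\in A$ lies in $\tilde\imath_m A$, hence in $\tilde\imath_m\tilde\imath_n A$. Define
\[ B_0=\tilde\imath_n A\cap A \qquad\text{and}\qquad B_1=\tilde\imath_n A\setminus A. \]
Using \eqref{eq:j-space-2} and scatteredness, I would massage these to be $\tau_n$-open, so that $B_0$ is the $\tau_n$-interior of $A$. I would check that $A\setminus B_0$ consists of points that are $\tau_m$-isolated relative to $B_0$, which is what allows them to be covered by $B_1$-type points satisfying \eqref{eq:j-space-3c}.

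Conversely, given \eqref{eq:j-space-3}, take $x\in\tilde\imath_m S$ with witness $U$ and set $A=U$. Choose $B_0,B_1$ from \eqref{eq:j-space-3} and use $V=U\cap (B_0\cup\{x\})$. This set is $\tau_m$-open by \eqref{eq:j-space-3c} when $x\in B_1$, and it is $U\cap B_0$ otherwise. Points $y\ne x$ of $V$ lie in $B_0\in\tau_n$ with $B_0\subset A$. I then need $B_0\setminus\{y\}\subset S$, which may require $x\notin B_0$ or an extra application of scatteredness.

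I expect the main obstacle to be matching \eqref{eq:j-space-3} exactly with axiom \ref{ax:j-6}, in particular getting $B_0$ and $B_1$ disjoint and $\tau_n$-open in the forward direction. My first attempt would be to take $B_1$ to be the union of the $\tau_n$-neighbourhoods witnessing $\tilde\imath_n$ at the points of $A\setminus\operatorname{int}_n A$, and then to shrink it using \eqref{eq:j-space-2} applied to $d_m$ of suitable sets.
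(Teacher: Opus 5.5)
Your reduction of the mixed axioms goes wrong at axiom (iv), and both directions inherit the error.

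\emph{The ``if'' direction.} Complementing $\tilde\imath_m S\subset\tilde\imath_n\tilde\imath_m S$ gives $d_nd_mA\subset d_mA$, i.e.\ $d_mA$ is $\tau_n$-\emph{closed}. Condition (2), by contrast, is exactly axiom (v) and nothing more. Take $\{x,y,z\}$ with $x<_1y<_0z$, all other relations empty, and upset topologies. Conditions (1), (2) and axioms (iii), (v), (vi) hold there, yet $x\Vdash[0]\bot\wedge\langle 1\rangle\langle 0\rangle\top$, so (iv) fails. Condition (3) also fails, for $A=\{x\}\in\tau_0$: $B_0=\varnothing$ is forced, so $y\in B_1$, but $\{y\}\notin\tau_0$. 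So in the ``if'' direction (iv) must be extracted from (3). In the converse, (3) cannot be obtained from (vi), (1), (2) alone.

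The paper gets (iv) by applying (3) not to $U$ but to $\{x\}\cup A'$, where $A'=U\cap\{y:\rho_{\tau_m}(y)<\rho_{\tau_m}(x)\}$ is still $\tau_m$-open, and then splitting on whether $x\in B_0$ or $x\in B_1$. This shrinking is also what your (vi) argument lacks. Suppose $x\in B_0$ but $x\notin S$. Then a point $y\in B_0\setminus\{x\}$ needs a $\tau_n$-neighbourhood avoiding $x$. The set $B_0\cap\{y':\rho_{\tau_m}(y')<\rho_{\tau_m}(x)\}\subset A'\subset S$ is one, because (iv), once derived, makes every $d_m^\alpha X$ $\tau_n$-closed. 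Here $B_0$ may be taken $\tau_m$-open by the paper's claim preceding the proof.

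\emph{The ``only if'' direction.} Your $B_1=\tilde\imath_nA\setminus A$ is disjoint from $A$, so (3a) would force $A=\operatorname{int}_nA$. The obstruction is more basic, though, and it also affects the paper's proof. With $B_0=\operatorname{int}_nA$ (your choice and the paper's), the paper's $B_1=\{x\notin B_0: B_0\cup\{x\}\in\tau_m\}$ need not be $\tau_n$-open. Its step from $y\notin B_1$ to $B_0\cup\{y\}\notin\tau_m$ overlooks the possibility $y\in B_0$. In fact (3) as stated fails in some $\mathsf J$-spaces.

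Take $X=\{w,x,y,z\}$ with $w<_0y$, $x<_1y$, $x<_1z$, all other relations empty, and upset topologies. Each relation is irreflexive and transitive, and no configuration $a<_1b<_0c$, $a<_0b<_1c$, or ($a<_1b$ and $a<_0c$) occurs, so every axiom of $\mathsf J$ is valid. Let $A=\{w,x,y\}\in\tau_0$.
\begin{itemize}
\item Any $\tau_1$-open set containing $x$ contains $z\notin A$, so $x\notin B_0$. Hence $x\in B_1$, and since $B_1$ is $\tau_1$-open, $y\in B_1$.
\item If $w\in B_1$, then (3c) for $w$ forces $y\in B_0$, contradicting disjointness.
\item If $w\in B_0$, then $B_0\cup\{x\}$ contains $w$ but not $y$, so it is not $\tau_0$-open, violating (3c) for $x$.
\end{itemize}
So no disjoint $B_0,B_1$ exist. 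The ``only if'' half is false as stated, and no construction can complete your plan.

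Dropping disjointness repairs the statement. Take $B_0=\operatorname{int}_nA$ and $B_1=X\setminus d_m(X\setminus B_0)$, which is $\tau_n$-open by (iv). These satisfy (3a)--(3c), using the paper's (correct) arguments that $B_0\in\tau_m$ and that (3a) holds. The ``if'' direction above never uses disjointness.
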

Before proving the theorem, we show an additional implication of condition~\eqref{eq:j-space-3}.
\begin{claim}\label{claim:b-0-t-m}
  Let $X$ be a $\mathsf{J}$-space and let $A\in\tau_m$. Suppose $B_0, B_1\in \tau_n$ satisfy condition~\eqref{eq:j-space-3}. Then there is $B'_0\in\tau_n$ such that $B_0',B_1$ satisfy condition~\eqref{eq:j-space-3}, and $B_0'\in\tau_m$.
\end{claim}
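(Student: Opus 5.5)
The plan is to modify $B_0$ only by removing the points that spoil $\tau_m$-openness, that is, to take a suitable $\tau_m$-interior of $B_0$. I would use Theorem~\ref{PropositionCharJSpaces}(2) to get $\tau_n$-openness, and scatteredness of $\tau_m$ to show that no point of $A$ is lost. Throughout, write $d=d_{\tau_m}$. I split into two cases according to whether $B_1\subset A$.

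\emph{Case 1: some $x\in B_1$ lies outside $A$.} Then $A\cap(B_0\cup\{x\})=B_0$, using $B_0\subset A$ and $x\notin A$. This is an intersection of two $\tau_m$-open sets, so $B_0\in\tau_m$ already, and we take $B_0'=B_0$. The same works trivially if $B_1=\emptyset$, since then $B_0=A$.

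\emph{Case 2: $\emptyset\neq B_1\subset A$.} Then $A=B_0\sqcup B_1$, so $B_0=A\setminus B_1$. I would set
\[
B_0' \;=\; B_0\setminus d B_1 \;=\; B_0\cap (X\setminus dB_1).
\]
\begin{itemize}
\item[(a)] \emph{$B_0'\in\tau_m$.} Let $y\in B_0'$. Pick a $\tau_m$-neighbourhood $U$ of $y$ disjoint from $B_1\setminus\{y\}=B_1$, and intersect it with $A$. No point $z$ of this neighbourhood lies in $dB_1$, since the neighbourhood itself avoids $B_1$. So the neighbourhood is contained in $B_0\setminus dB_1$.
\item[(b)] \emph{$B_0'\in\tau_n$.} This needs the complement of $dB_1$ to be $\tau_n$-open, at least inside $A$. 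I would try to prove $d B_1\cap A = d(B_1)\cap d(\ldots)$-style identities. Alternatively, I would rewrite $B_0'$ as $B_0\setminus \bigcap_{x\in B_1} (\text{points forced into every neighbourhood of } x)$ and use that $dS$ is $\tau_n$-open by Theorem~\ref{PropositionCharJSpaces}(2), applied to $S=X\setminus B_0$. This uses $\mathrm{int}_m B_0 = B_0\setminus d(X\setminus B_0)$, and $d(X\setminus B_0)$ is $\tau_n$-open.
\item[(c)] \emph{Conditions (a)--(c) of \eqref{eq:j-space-3} for $B_0',B_1$.} Disjointness, $B_0'\subset A$, and $B_0'\cup\{x\}\in\tau_m$ follow as in step (a), intersecting with $B_0\cup\{x\}$.
\end{itemize}

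The main obstacle is $A\subset B_0'\cup B_1$, i.e. showing $B_0\cap dB_1=\emptyset$. Suppose $y\in B_0\cap dB_1$. Every $\tau_m$-neighbourhood of $y$ meets $B_1$, while $B_0\cup\{x\}$ is a neighbourhood of $y$ meeting $B_1$ only in $x$. Hence $B_1=\{x\}$ and $y\in d\{x\}$.

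I would then try to derive a contradiction from scatteredness of $\tau_m$ together with the fact that $B_0,B_1$ are disjoint $\tau_n$-open sets. First I would try to show that $x\in d\{y\}$ as well, giving a two-point cycle that is not scattered. If this fails, I would instead prove the weaker statement that such $y$ can be moved, and re-choose $B_0'$ as $\mathrm{int}_m(B_0)$ directly, verifying the covering condition via axiom \ref{ax:j-5} interpreted through step (b).

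This covering step is the one I expect to require the real use of the $\mathsf{J}$-axioms beyond condition (2).
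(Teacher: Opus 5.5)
There is a genuine gap, and it sits exactly at the step you flag as the main obstacle: the covering condition $A\subseteq B_0'\cup B_1$, i.e.\ $B_0\cap dB_1=\emptyset$. It cannot be proved, because with $B_1$ held fixed the statement is false. Take the two-point Kripke frame on $\{x,y\}$ whose only arrow is $y<_m x$, so $\tau_m$ has opens $\emptyset,\{x\},\{x,y\}$ and every other $\tau_k$ (in particular $\tau_n$) is discrete. This is a $\mathsf{J}$-frame, hence a $\mathsf{J}$-space; you can also check the conditions of Theorem~\ref{PropositionCharJSpaces} directly, using the pair $(A,\emptyset)$. Now put $A=\{x,y\}$, $B_0=\{y\}$, $B_1=\{x\}$. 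These satisfy condition (3), since $B_0\cup\{x\}=A\in\tau_m$. Any $B_0'$ for which $B_0',B_1$ satisfy (3) must be contained in $A$, disjoint from $\{x\}$, and cover $A$ together with $\{x\}$. So $B_0'=\{y\}$, which is not $\tau_m$-open. Here $y\in d\{x\}$ but $x\notin d\{y\}$, so the two-point cycle you hope to extract from scatteredness does not exist. Your candidate $B_0\setminus dB_1$, and likewise $\mathrm{int}_m B_0$, equals $\emptyset$ and fails to cover $y$. No appeal to axiom (v) can rescue this.

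The missing idea is that you must also be allowed to replace $B_1$. This is what the paper's proof does, and it is all that the later application needs: there one only wants some pair $V_0,V_1$ satisfying (3) with $V_0\in\tau_m$. Your Case~1 is correct: if some $x\in B_1$ lies outside $A$, then $B_0=A\cap(B_0\cup\{x\})\in\tau_m$. Your reduction in Case~2 to $B_1=\{x\}$ is also correct. (The paper instead splits into $|B_1|>1$, $B_1=\emptyset$ and $|B_1|=1$.) In the remaining case $\{x\}=B_1\subseteq A$ you have $A=B_0\cup\{x\}$. Simply take $B_0'=A$ and $B_1'=\emptyset$: $A\in\tau_m$ by hypothesis, $A=B_0\cup B_1\in\tau_n$, and (a)--(c) hold trivially. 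With this, the construction $B_0\setminus dB_1$ and step (b) become unnecessary. That is fortunate, because (b) as written does not work either. The $\tau_n$-openness of $d(X\setminus B_0)$ does not make $B_0\setminus d(X\setminus B_0)$ $\tau_n$-open. For that you would need the complement $X\setminus d_{\tau_m}S$ to be $\tau_n$-open, which comes from axiom (iv), not from condition (2).
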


\begin{proof}
    The proof is by case distinction:
    \begin{itemize}
        \item If $|B_1| > 1$, then there are two distinct $x_0,x_1\in B_1$, hence $B_0'=B_0 = (\set{x_0}\cup B_0)\cap(\set{x_1}\cup B_0)\in\tau_m$;
        \item If $B_1 = \emptyset$, then $B_0'=B_0 = A \in \tau_m$;
        \item If $B_1 = \set{x}$, then either $A = B_0$, and so $B_0$ is already open in $\tau_m$, or else $A = B_0 \cup B_1 = B_0\cup\set{x}$, in which case we may define $B'_0 = A$ and $B'_1 = \varnothing$, ensuring that $B'_0 \in \tau_m$.
    \end{itemize}
This completes the proof of the claim.
\end{proof}

The claim allows us to assume without loss of generality that $B_0\in\tau_m$ as well.

\begin{proof}[Proof of Theorem \ref{PropositionCharJSpaces}]

It is known \cite{Bl90} that a space is a model of $\gl$ if and only if it is scattered. Thus, we focus on the second and the third clause of the definition and Axioms \ref{ax:j-4}-\ref{ax:j-6}.

We start with showing that the topological conditions imply validity of the axioms.  Assume conditions~\eqref{eq:j-space-2} and \eqref{eq:j-space-3} hold.

To verify Axiom~\ref{ax:j-5}, we observe that by \eqref{eq:j-space-2} for a given $m<n$ we have $d_{\tau_m} A \in \tau_{n}$ for all $A\subset X$, then for any $\phi$, $\inter{\tup m \phi} = d_m\inter{\phi}\in \tau_{n}$, $\inter{\tup m \phi}\subset \inter{[n]\tup m \phi}$, and so $X\models\tup{m}\phi\to[n]\tup{m}\phi$.

To verify Axioms~\ref{ax:j-4} and \ref{ax:j-6}, we assume there is $x\in X$ such that $x\Vdash [m]\phi$, that is there is $U\in\tau_m$ with $x\in U$ such that $U\setminus\set{x} \subset\inter{\phi}$. Let
\[
  A= U\cap\set{y:\rho_{\tau_m}(y)<\rho_{\tau_m}(x)} \in \tau_m.
\]

\begin{claim}
  If $(X,\sigma)$ is a scattered topological space, then for any $y\in X$, the sets $Y=\set{x : \rho (x) < \rho(y)}$ and $Y\cup\set{y}$ are open.
\end{claim}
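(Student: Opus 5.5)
We reduce both assertions to one fact about the rank function: in a scattered space $(X,\sigma)$ the set $d^{\alpha}X$ is closed for every ordinal $\alpha$. Unwinding the definition $\rho(x)=\min\set{\alpha : x\notin d^{\alpha+1}X}$, we get $\rho(x)\ge\alpha$ if and only if $x\in d^{\alpha}X$. Here the iterates are $d^{0}X=X$, $d^{\alpha+1}X=d(d^{\alpha}X)$, and $d^{\lambda}X=\bigcap_{\alpha<\lambda}d^{\alpha}X$ at limits. We also use that this sequence is decreasing and, since the space is scattered, eventually empty, so $\rho$ is well defined.

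First we show the iterates are decreasing and closed, by induction on $\alpha$. The derivative of any set is closed: if $z\notin dA$, then some open $U\ni z$ has $U\cap A\subset\set{z}$, and then every $w\in U\setminus\set{z}$ also has $w\notin dA$ (witnessed by the open set $U\setminus\set{z}$ when $z\notin A$, or $U$ itself when $A\cap U=\emptyset$). Hence $U\setminus\set{z}$ misses $dA$. Since $z\notin dA$ as well, $U$ misses $dA$, so $dA$ is closed. For a closed set $C$ we have $dC\subset \overline{C}=C$, which gives monotonicity. Intersections at limit stages preserve closedness.

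Now fix $y$ and put $\beta=\rho(y)$. Then $Y=\set{x:\rho(x)<\beta}=X\setminus d^{\beta}X$, which is the complement of a closed set and hence open. For $Y\cup\set{y}$, note that $y\in d^{\beta}X\setminus d^{\beta+1}X$. So $y$ is not a limit point of $d^{\beta}X$, and there is an open $U\ni y$ with $U\cap d^{\beta}X=\set{y}$. Then $U\subset Y\cup\set{y}$, and therefore $Y\cup\set{y}=Y\cup U$ is open.

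The only delicate point is getting the indexing right, namely checking $\rho(x)\ge\alpha\iff x\in d^{\alpha}X$ carefully at limit ordinals. This follows from the decreasing-intersection definition, since $x\in d^{\lambda}X$ exactly when $x\in d^{\alpha+1}X$ for all $\alpha<\lambda$.
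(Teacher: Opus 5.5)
Your overall strategy is sound, but the auxiliary lemma ``the derivative of any set is closed'' is false as stated. The step that breaks in your proof is the claim that $U\setminus\set{z}$ is open. Scattered spaces need not be $T_1$; for instance, a Kripke frame with its up-set topology is not $T_1$ as soon as it has an arrow. So removing a point from an open set need not leave an open set.

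Concretely, let $X=\set{u,z}$ with open sets $\emptyset$, $\set{z}$, $X$. This is the up-set topology of the frame $u<z$, and it is scattered. Let $A=\set{z}$. Then $z\notin dA$, and $U=X$ is an open set with $U\cap A\subset\set{z}$. Yet $U\setminus\set{z}=\set{u}$ is not open, and in fact $u\in dA$. So your inference that every $w\in U\setminus\set{z}$ lies outside $dA$ is wrong. Without scatteredness the lemma itself fails: in the indiscrete space on $\set{a,b}$ one has $d\set{a}=\set{b}$, which is not closed. (Also, the case where $U\setminus\set{z}$ is the witness should read $z\in A$, not $z\notin A$.)

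The repair is easy, because you only ever apply $d$ to the closed sets $d^{\alpha}X$. If $C$ is closed, $z\in C$ and $U\cap C=\set{z}$, then $U\setminus\set{z}=U\setminus C$ is open. So your argument goes through verbatim for closed sets. Together with closure under intersections, this shows by induction that every $d^{\alpha}X$ is closed, in fact in an arbitrary space. Alternatively, you can use scatteredness: $z$ is isolated in $\overline{\set{z}}$, so you may shrink $U$ to get $U\cap\overline{\set{z}}=\set{z}$, which makes $U\setminus\set{z}$ open.

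With this fixed, the rest of your argument is correct: $\rho(x)\ge\alpha\iff x\in d^{\alpha}X$, $Y=X\setminus d^{\rho(y)}X$, and $Y\cup\set{y}=Y\cup U$. Your route differs from the paper's. The paper reads openness off a neighbourhood characterization of rank: a point of rank $\ge\alpha$ has points of every rank $\beta<\alpha$ in each of its neighbourhoods. You instead derive it from the closedness of the iterated derived sets, which is the cleaner and more rigorous route.
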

\begin{proof}[Proof of the Claim]
  Fix an $\alpha$. $\rho(x) \ge \alpha$ if and only if whenever $U\in\sigma$ and $x\in U$, then for each $\beta<\alpha$ there is a point $y\in U$ with $\rho(y)=\gamma$. It implies that for each $z\in Y$, there is $U\in\sigma$ with $U\subset Y$ and there is $V\in\sigma$ such that $y\in V$ and $V\subset Y\cap\set{y}$. So $Y$ and $Y\cup\set{y}$ are open.
\end{proof}

Trivially, $ A \subset U\setminus\set{x}$, so $A\subset\inter{[m]\phi}$.
Since $X$ is a $\mathsf{J}$-space, there are disjoint $V_0,V_1\in\tau_{n}$ with $\set{x}\cup A\subset V_0\cup V_1$ which satisfy property \eqref{eq:j-space-3} by Claim~\ref{claim:b-0-t-m} we can assume that $V_0\in \tau_m$. If $x\in V_0$, then $x\Vdash [n][m]\phi$ readily. Otherwise, by \eqref{eq:j-space-3c} and $V_0\subset\inter{\phi}$ we get $V_1\subset\inter{[m]\phi}$, it follows $x\Vdash [n][m]\phi$, and so $X\models [m]\phi\to[n][m]\phi$.

 On the other hand $V_0\subset \inter{\phi}$ hence $V_0\subset \inter{[n]\phi}$ hence $V_1\subset \inter{[m][n]\phi}$ and so $x\Vdash [m][n]\phi$ if $x\in V_1$. Moreover, $V_0\subset \inter{[n]\phi}$ implies $V_0\subset \inter{[m][n]\phi}$ since $V_0\in\tau_m$ as well, and so $X\models [m]\phi\to[m][n]\phi$.

 Thus, every space satisfying the conditions validates $\mathsf{J}$.

\medskip

For the opposite direction we assume that $X$ is a space that validates $\mathsf{J}$. For $A\in\tau_m$ and $p\in \vars$ let $v(p)=A$, then $d_{\tau_m}A = \inter{\tup mp}$. Since Axiom~\ref{ax:j-5} holds, for each $x\in d_{\tau_m}A $ we have $x\Vdash [n]\tup{m}p$. Topologically, this means that is there is $U_x\in \tau_n$ with $x\in U_x\in\tau_n$ such that $U_x\setminus \set{x}\subset d_{\tau_m}A$, then $d_{\tau_m}A = \bigcup_{x\in d_{\tau_m}A} U_x \in \tau_n$. Thus, \eqref{eq:j-space-2} holds.

Now for $A\subset X$ and $p,q\in \vars$ we let $B_0 = \bigcup\set{U\in\tau_n : U\subset A}$, $B_1 = \set{x \in X\setminus B_0 : \set{x}\cup B_0\in\tau_m}$ and $v(p)=A$, $v(q)=B_0$. We show that $B_0, B_1$ satisfy \eqref{eq:j-space-3} for $A$. Conditions \eqref{eq:j-space-3b}, \eqref{eq:j-space-3c} follow immediately as well as the fact that $B_0\in\tau_n$. It is now left to show that $B_1\in\tau_n$ and that $A\subset B_0\cup B_1$. For further convenience we first show that $B_0\in\tau_m$ necessarily. For the sake of contradiction assume $B_0\notin \tau_m$. Let $x \in B_0$ and $x$ not an interior point according to $\tau_m$. First,  $x\Vdash [m]p$. Moreover, since $x$ is not interior in $B_0$ for any $U\in\tau_m$ with $x\in U$ we have $U\setminus B_0\ne\varnothing$. Take $y\in U\setminus B_0$, if $y\in A$ then $y \Vdash \tup{n}\neg p$ by the construction of $B_0$ if $y\notin A$ then $y\Vdash \neg p$. And so $x\Vdash \tup{m}(\tup{n}\neg p\lor \neg p)$. It is a contradiction, since $X$ is a $\mathsf{J}$-space, it validates Axiom~\ref{ax:j-6} in particular its instance $[m]p\to[m][n]p$.

To show \eqref{eq:j-space-3a} we assume there is $x\in A\setminus B_0$ such that $\set{x}\cup B_0\notin \tau_m$. Since $B_0\in \tau_m$ we can conclude that each $\tau_m$-neighbourhood $U$ of $x$ contains a point $y\in X\setminus B_0$. From the construction of $B_0$ we can also conclude that every $\tau_n$ neighbourhood  of $y$ contains a point $z\in X\setminus A$. That is $x\Vdash \tup{m}\tup{n}\neg p$. Since $x\Vdash [m]p$ this contradicts the assumption that $X$ validates Axiom~\ref{ax:j-6}, and so $B_0, B_1$ cover $A$ and condition~\eqref{eq:j-space-3a} is satisfied. Now assume $B_1\notin\tau_n$ witnessed by $x\in B_1$ such that every $\tau_n$-neighbourhood of $x$ contains a point $y\in X\setminus B_1$ that is $\set{y}\cup B_0 \notin \tau_m$, hence $y\Vdash \tup{m}\neg q$ and $x\Vdash \tup{n}\tup{m}\neg q$ and $x\Vdash [m]q$ which contradicts Axiom~\ref{ax:j-4}, and so $B_1\in\tau_n$. Hence, a space is a $\mathsf{J}$-space if and only if it meets Conditions~\eqref{eq:j-space-1}-\eqref{eq:j-space-3}. 
\end{proof}

Note that if $(X, \vec \tau)$ is a $\glp$-space, then, in the notation of Theorem \ref{PropositionCharJSpaces}, we can always set $B_0 = A$ and $B_1 = \varnothing$.  Thus, the combinatorial characterization of $\mathsf{J}$-spaces reflects the fact that $\mathsf{J}$ is a sublogic of $\glp$.

\section{$\mathsf{J}$-trees and $\mathsf{J}$-bouquets}
Beklemishev \cite{Bek} has carried out a thorough semantic study of the logic $\mathsf{J}$, in particular introducing the following class of models:
\begin{definition}\label{def:j-tree}
  Let $n\in\mathbb{N}$. A \emph{$\mathsf{J}_n$-frame} is a pair $(T, \undertilde <)$ where $T$ is a set and $\undertilde <$ is a tuple $\tup{<_i}_{i<n}$ such that the following hold:
\begin{enumerate}
        \item $(T,<_i)$ is a converse well-founded relation on $T$ for each $i < n$;
        \item $\forall x, y\left(x <_n y \Rightarrow \forall z\left(x <_m z \Leftrightarrow y <_m z\right)\right)$ for each $m<n$;\label{eq:j-tree-2}
        \item $\forall x, y\left(x <_m y \land y <_n z \Rightarrow x <_m z\right)$ for each $m \leq n$;\footnote{Note that this entails that all $<_n$ are transitive.}\label{eq:j-tree-3}
\end{enumerate}
\end{definition}
$\mathsf{J}_n$-frames (and relational structures in general) can be viewed as polytopological spaces $(T, \vec\tau)$ where basic $\tau_i$-open subsets are cones of the form $\{y \in T: x <_i y\}$
for some $x\in T$. Thus, topological semantics extend the usual Kripke semantics for modal logic.

If $(T, \undertilde <)$ is a $\mathsf{J}_n$-frame, we may consider the reflexive and symmetric closure $E_i$ of $\bigcup_{i\leq j<n} <_j$. $E_i$-equivalence classes are called \textit{$i$-planes}. Observe that, according to the definition of a $\mathsf{J}_n$-frame, each $<_i$ induces a converse well-founded, transitive relation on the set of all $E_{i+1}$-planes, where $E_{n}$ is the identity. A \textit{$\mathsf{J}_n$-tree} is a $\mathsf{J}_n$-frame such that $(T/E_{i+1}, <_i)$ is a tree for each $i<n$. We introduce some notation: suppose $(T, \undertilde <)$ is a $\mathsf{J}_n$-tree. Then, we denote by $R^T_i$ the $<_i$-root of $T/E_{i+1}$ and by $r^T$ the hereditary root of $T$, that is $x \not <_i r^T$ for any $i<n$ and $x\in T$. Note that $R^T_{-1}=T$ is well defined.

\begin{theorem}[Beklemishev \cite{Bek}] \label{TheoremBekJTrees}
$\mathsf{J}_n$ is sound and complete with respect to the class of finite $\mathsf{J}_n$-trees.
\end{theorem}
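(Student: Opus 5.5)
Soundness and completeness are two separate tasks. I would do soundness by a direct frame-correspondence check and completeness by a canonical-model unravelling followed by a finite reduction.

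\emph{Soundness.} By induction on derivations it suffices to show that every finite $\mathsf{J}_n$-tree validates each axiom, since modus ponens and necessitation preserve validity. Axiom~\ref{ax:j-2} is automatic. Axiom~\ref{ax:j-3} holds because each $<_i$ is transitive, which follows from condition~\eqref{eq:j-tree-3} with $m=n$, and converse well-founded. The remaining axioms are first-order correspondents, which I would verify one by one:
\begin{itemize}
\item Axiom~\ref{ax:j-4} for $m=n$ is transitivity. For $m<n$ it corresponds to ``$x<_n y <_m z \Rightarrow x<_m z$'', which is the right-to-left half of condition~\eqref{eq:j-tree-2}.
\item Axiom~\ref{ax:j-5} corresponds to ``$x<_n y\wedge x<_m z\Rightarrow y<_m z$'', which is the left-to-right half of condition~\eqref{eq:j-tree-2}.
\item Axiom~\ref{ax:j-6} corresponds to condition~\eqref{eq:j-tree-3}.
\end{itemize}
Alternatively, one could view the tree as a polytopological space and check the three clauses of Theorem~\ref{PropositionCharJSpaces}, but the direct check is shorter. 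The tree structure is not used for soundness.

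\emph{Completeness.} Suppose $\mathsf{J}_n\nvdash\phi$. I would argue in three steps.
\begin{enumerate}
\item Restrict to the finite set $\Phi$ of subformulas of $\phi$, closed under negation and under the extra formulas needed for the axioms (for instance $[m]\psi\mapsto[n][m]\psi$ for relevant $m\le n$). Take the finite collection of maximal $\mathsf{J}_n$-consistent subsets of $\Phi$.
\item Define relations on these sets following the standard $\gl$ filtration: $\Gamma\prec_i\Delta$ iff every $[i]\psi\in\Gamma$ gives $\psi,[i]\psi\in\Delta$, and some $[i]\chi\in\Delta$ is not in $\Gamma$.
\item These relations need not satisfy conditions~\eqref{eq:j-tree-2} and \eqref{eq:j-tree-3}. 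So instead of using them directly, build the tree by induction on the number of modalities, unravelling plane by plane: first a $\gl$-tree for $<_0$. Then, to each node, attach a $\mathsf{J}_{n-1}$-tree (for modalities $1,\dots,n-1$), obtained from the induction hypothesis applied to a suitable consistent set, as its $1$-plane. Finally, define $<_0$ between points so that condition~\eqref{eq:j-tree-2} holds: all points of one $1$-plane see exactly the same $<_0$-successors.
\end{enumerate}
The truth lemma is then proved by induction on formulas, using axioms \ref{ax:j-4} and \ref{ax:j-5} to show that all points of a plane agree on $\tup0$-formulas. Indeed, $\tup0\psi$ and $[0]\psi$ are each propagated along $<_j$ for $j>0$ by axioms~\ref{ax:j-5} and \ref{ax:j-4} respectively.

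\emph{Main obstacle.} The hardest step is ensuring that condition~\eqref{eq:j-tree-3} holds for $m<n$: $<_m$-predecessors of $y$ must also see everything $<_n$-above $y$. Correspondingly, the $\tup m$-obligations realised in the attached higher planes must be consistent with the $[m]$-information in lower nodes. Axiom~\ref{ax:j-6} ($[m]\psi\to[m][n]\psi$) is exactly what guarantees that whenever $[m]\psi$ holds at $x$, it persists through every higher-plane point above a $<_m$-successor. So the plan is to choose the maximal consistent sets for the attached $\mathsf{J}_{n-1}$-tree so that they contain $\{\psi,[n]\psi : [m]\psi\in x\}$, and to show this set is consistent using axiom~\ref{ax:j-6}. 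This is where I would spend the effort, and I would consult Beklemishev's blow-up construction \cite{Bek} if the direct argument becomes unwieldy.
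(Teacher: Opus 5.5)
The paper gives no proof of this theorem (it is quoted from Beklemishev), so I am judging your argument on its own. Your soundness half is correct: the correspondences you list are the right ones.

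The completeness half has a workable architecture: induction on the number of modalities, $\mathsf{J}_{n-1}$-trees attached as $1$-planes, and one common set of $<_0$-successors per plane. However, the step everything hinges on is left open, and the mechanism you propose for it does not fit the construction.

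First, the induction hypothesis concerns a language without $[0]$, so it cannot be applied to a set of $\lang_n$-formulas. You must replace each subformula $[0]\chi$ by a fresh letter $q_\chi$. Then note that $\mathsf{J}_n$-consistency implies consistency in the index-shifted $\mathsf{J}_{n-1}$, because substituting $[0]\chi$ back for $q_\chi$ turns a shifted $\mathsf{J}_{n-1}$-proof into a $\mathsf{J}_n$-proof.

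Second, the induction hypothesis returns an arbitrary finite tree in which only the root formula is under your control. There are no maximal consistent sets at the other points of the plane to ``choose''. Two things must hold throughout a plane $P_y$: each $q_\chi$ has the value dictated by $y$, and $\psi$ holds whenever $[0]\psi$ belongs to the parent node $x$ (by frame condition (3), all of $P_y$ is $<_0$-visible from $P_x$). Both have to be forced from the root. Put $\theta\wedge\bigwedge_{1\le k<n}[k]\theta$ into the root formula. This conjunction is then inherited along every $<_k$-step with $k\ge 1$, by validity in the plane of axiom (iv) for the indices $\le k$ and axiom (vi) for the indices $\ge k$.

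For $\theta=\pm q_\chi$ the required conjuncts follow from $y$ by axioms (iv) and (v). For $\theta=\psi$ they require $[k]\psi\in y$, and your relation $\prec_0$ from step 2 does not provide this. The formula $p\wedge[0]p\wedge\neg[1]p$ is satisfiable on a single $<_1$-edge, so nothing in $x\prec_0 y$ prevents $\neg[1]\psi\in y$. In that case some point of $P_y$ refutes $\psi$, and $[0]\psi$ fails on $P_x$ although $[0]\psi\in x$.

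The repairs are as follows.
\begin{enumerate}
\item Strengthen $\prec_0$ by the clause ``$[0]\psi\in x\Rightarrow[k]\psi\in y$ for $1\le k<n$''. Close $\Phi$ under $[0]\psi\mapsto[k]\psi$; the closure under $[m]\psi\mapsto[k][m]\psi$ you mention is not what is needed. The existence lemma for $\langle 0\rangle\chi$ then follows from L\"ob together with $[0]\psi\to[0][0]\psi$ and $[0]\psi\to[0][k]\psi$.
\item Formulate the truth lemma as follows: at every point of $P_y$, truth of $\theta$ in the amalgamated model coincides with truth of its atomized version in the plane's own model. The case $[0]\chi$ uses the constancy of $q_\chi$ and the strengthened $\prec_0$.
\item State the induction hypothesis for trees all of whose points are reachable from the root. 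Your construction preserves this property.
\end{enumerate}
With these repairs the plan goes through.

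There is also a more direct alternative. Add the analogous clauses to every $\prec_k$: $[k]\psi\in x\Rightarrow[j]\psi\in y$ for $j>k$, and agreement of $x,y$ on all $[m]$-formulas with $m<k$. This makes the finite canonical frame a $\mathsf{J}_n$-frame outright, and only the unravelling into a tree remains.
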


It is well known that the provability logic $\mathsf{GL}$ is not strongly Kripke complete. Thus, strong Kripke completeness fails for $\mathsf{J}$ as well.
In this section we establish strong completeness of $\mathsf{J}$ with respect to a kind of Kripke-like structures inspired by the ``$\omega$-bouquets'' of \cite{AG17} and which we call \textit{$\mathsf{J}$-bouquets}.

\begin{definition}\label{def:bouquet}
  For $n\leq \omega$, a \emph{$\mathsf{J}_n$-bouquet} is a pair $(B, \undertilde <)$ where $B$ is a set, $\undertilde <$ is a tuple $\tup{<_i}_{i<n}$ and for each $m<n$, $(B, <_0, \hdots, <_{m-1})$ is a $\mathsf{J}_m$-tree in which every point has at most countably many $<_i$-successors and each point of successor $<_i$-rank has finitely many immediate $<_i$-successors for each $i$.

Given a $\mathsf{J}_n$-bouquet $(B, \undertilde <)$, we define a polytopology $\vec\tau$ based on $B$. The basic $\tau_i$-neighbourhoods of a point $x$ are defined by induction on the $<_i$-rank of $x$. We say $U\subset B$ is a basic $\tau_i$-neighbourhood of $x$ if $x \in U$ and one of the following holds:
\begin{enumerate}
  \item $x$ has finitely many immediate $<_i$-successors $x_0,\hdots, x_k$ and $U = \{x\} \cup \bigcup \{U_l: l\leq k\}$, where $U_l$ is a $\tau_i$-neighbourhood of $x_l$ for each $0\leq l \leq k$; or
  \item $x$ has infinitely many immediate $<_i$-successors $\{x_l:l\in\mathbb{N}\}$, and there is $l^*\in\mathbb{N}$ such that $U = \{x\} \cup \bigcup \{U_l: l^* \leq l\}$, where $U_l$ is a $\tau_i$-neighbourhood of $x_l$ for each $l^*\leq l$.

\end{enumerate}
  
We conventionally identify the $\mathsf{J}_n$-bouquet $(B,\undertilde<)$ with the corresponding polytopological space $(B,\vec\tau)$ and we say that a topological space $(B,\vec\tau)$ is a \emph{$\mathsf{J}$-bouquet} if it is a $\mathsf{J}_\omega$-bouquet.
\end{definition}

$\mathsf{J}$-bouquets are obtained as ``limit'' cases of $\mathsf{J}$-trees, with the only difference occurring at infinitely branching points. We shall retain the notation introduced for $\mathsf{J}$-trees, including the notion of an $i$-plane and the use of $R_i^B$ to denote the $<_i$-root of $B/E_{i+1}$. Note that in the original definition of bouquets in \cite{AG17}, new neighbourhoods were only added to the points of limit rank, in our case we do this at any point with infinitely many immediate successors, this only entails technical differences.

 %   moreover, there is a point $r\in B$ such that for no $b\in B$ and $i<\omega$, $b<_i r$ and for each $b\in B\setminus{r}$ there is $i<\omega$ such that $r<_i b$. We also define root planes $R_i$ for each $i$, namely $R_i = \set{b\in B : \forall b'\in B(b'\not<_ib)}$ note that $R_i = \set{r}\cup\bigcup_{j<i}{{<_j}(r)}$, where by ${<_j}(r)$ we conventionally denote $\set{b\in B : r <_i b}$.
    
%    For each $i$ and each $b\in R_i$, $b$ can have up to infinitely many $<_i$-successors, whereas each $b\in B\setminus{R_i}$ has finitely many $<_i$-successors.

\begin{lemma}\label{LemmaJBouquetsAreJSpaces}
Suppose $(B, \undertilde <)$ is a $\mathsf{J}$-bouquet. Then, $(B, \undertilde <)$  is a $\mathsf{J}$-space.
\end{lemma}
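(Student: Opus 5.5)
We verify the three conditions of Theorem~\ref{PropositionCharJSpaces} for the polytopology $\vec\tau$ of Definition~\ref{def:bouquet}; the equivalence there then gives that $(B,\vec\tau)$ is a $\mathsf{J}$-space. Throughout, fix $m<n<\omega$ and work in the $\mathsf{J}_{n+1}$-tree $(B,<_0,\dots,<_n)$ obtained by restricting the bouquet. First we record two structural facts about basic neighbourhoods, each proved by induction on $<_i$-rank. (a) Every basic $\tau_i$-neighbourhood of $x$ is contained in $\{x\}\cup\{y: x<_i y\}$, and every $y$ in it other than $x$ lies in a basic neighbourhood of itself contained in it; so these sets form a base for a topology. (b) The $\tau_i$-derived set of $A$ is computed as in the Kripke case, except that at a point $x$ with infinitely many immediate $<_i$-successors we have $x\in d_{\tau_i}A$ iff infinitely many of the cones above immediate successors meet $A$ (equivalently, are such that $A$ accumulates there in the relevant sense).

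\textbf{Scatteredness.} For condition~\eqref{eq:j-space-1}, take a nonempty $A$ and a $<_m$-maximal point $x\in A$, which exists by converse well-foundedness. The singleton-based basic neighbourhood $\{x\}\cup\bigcup_l U_l$, with $U_l$ chosen inside the cones over successors, meets $A$ only in $x$, because all its other points are $<_m$-above $x$. Hence $x$ is isolated in $A$.

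\textbf{Condition~\eqref{eq:j-space-2}.} Let $x\in d_{\tau_m}A$. By (b), membership of $y$ in $d_{\tau_m}A$ depends only on the set $\{z: y<_m z\}$ together with its immediate-successor structure. By Definition~\ref{def:j-tree}\eqref{eq:j-tree-2}, every $y$ with $x<_n y$ has the same $<_m$-successors as $x$, so it lies in the same $m$-plane. We need the $\tau_m$-behaviour of $x$ and $y$ to coincide. For this we check that the immediate $<_m$-successors are the same (they are the same set), and hence so are the infinitely-branching clauses. Then any basic $\tau_n$-neighbourhood of $x$ lies inside $d_{\tau_m}A$, which is therefore $\tau_n$-open.

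\textbf{Condition~\eqref{eq:j-space-3}.} This is the main work. Given $A\in\tau_m$, let $B_0$ be the $\tau_n$-interior of $A$, and let $B_1$ be the set of points $x\notin B_0$ with $B_0\cup\{x\}\in\tau_m$; this mirrors the construction in the converse direction of Theorem~\ref{PropositionCharJSpaces}. Conditions \eqref{eq:j-space-3b} and \eqref{eq:j-space-3c} then hold by definition, and disjointness is immediate. The remaining tasks are to show $A\subset B_0\cup B_1$ and $B_1\in\tau_n$, and the key lemma is the following. If $x\in A$, choose a basic $\tau_m$-neighbourhood $U\subset A$ of $x$. Each point $y\in U\setminus\{x\}$ satisfies $x<_m y$. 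By axiom \eqref{eq:j-tree-3}, every $z$ with $y<_n z$ also satisfies $x<_m z$, and $z$ lies in the corresponding subcone, so that a basic $\tau_n$-neighbourhood of $y$ fits inside $U$. Thus $U\setminus\{x\}\subset B_0$, and $B_0\cup\{x\}\supseteq U$ is a union of $\tau_m$-open sets, giving $x\in B_0\cup B_1$. For $B_1\in\tau_n$, take $x\in B_1$ and $y$ with $x<_n y$. Since $y$ has exactly the same $<_m$-successors as $x$, the set $B_0\cup\{y\}$ is $\tau_m$-open iff $B_0\cup\{x\}$ is. The main obstacle is exactly this step: making the "same successors implies same $\tau_m$-neighbourhood structure" transfer rigorous in the presence of the infinitely-branching clause. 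It requires that the immediate $<_m$-successors and their enumeration tails are determined by the successor set alone, which we verify directly from the definition of basic neighbourhoods by induction on rank.
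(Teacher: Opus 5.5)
Your treatment of scatteredness and of condition~\eqref{eq:j-space-2} is fine and is essentially the paper's. The gap is in condition~\eqref{eq:j-space-3}, at the infinitely-branching clause you flagged.

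Your key claim $U\setminus\{x\}\subset B_0$ is false. A basic $\tau_m$-neighbourhood may omit finitely many immediate $<_m$-successors of an infinitely branching point, and an omitted one can be a $<_n$-successor of a retained one. (Both lie in one $(m+1)$-plane, so both are immediate successors of that point.) The step ``$z$ lies in the corresponding subcone'' is what fails. For instance, let $r$ have immediate $<_0$-successors $a_0,b_0,a_1,a_2,\dots$, with $a_0,b_0$ $<_0$-maximal and $a_k$ of $<_0$-rank $k$. Let $b_0<_1a_0$ be the only $<_1$-pair and all higher relations empty; this is even a small $\mathsf{J}$-bouquet. Then $U=B\setminus\{a_0\}$ is a basic $\tau_0$-neighbourhood of $r$, but every $\tau_1$-neighbourhood of $b_0\in U$ contains $a_0$, so $b_0\notin B_0$.

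Your argument for $B_1\in\tau_n$ has a second hole. A $<_n$-successor $y$ of $x\in B_1$ may lie in $B_0$, and is then excluded from $B_1$ by definition. In the same bouquet, $A=B\setminus\{b_0\}$ is both $\tau_0$-open and $\tau_1$-open, so $B_0=A$ and $B_1=\{b_0\}$, which is not $\tau_1$-open. (The converse half of Theorem~\ref{PropositionCharJSpaces}, from which these witnesses come, has the same oversight.) You also use $B_0\in\tau_m$ without proof when you call $B_0\cup\{x\}$ a union of $\tau_m$-open sets.

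More seriously, no choice of witnesses rescues this route, because \eqref{eq:j-space-3} can fail in a (non-small) $\mathsf{J}$-bouquet. Let $w$ have as immediate $<_0$-successors points $a_k$ ($k\ge1$) of $<_0$-rank $k$, together with all points of a $1$-plane $P$ of $<_0$-maximal points. On $P$, let $<_1$ be a tree in which $y$ has the single immediate successor $u$, and $u$ has immediate successors $u_0,u_1,\dots$ of $<_1$-ranks $0,1,\dots$. Take $m=0$, $n=1$ and $A=B\setminus\{u\}\in\tau_0$.
\begin{itemize}
\item Every $\tau_1$-neighbourhood of $y$ contains $u\notin A$, so $y\notin B_0$, hence $y\in B_1$.
\item Then $B_1\in\tau_1$ forces $u_k\in B_1$, hence $u_k\notin B_0$, for almost all $k$.
\item But $w\in A$ lies in $B_0$ or in $B_1$. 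Applying \eqref{eq:j-space-3c} to $x=y$, respectively $x=w$, requires $B_0\cup\{x\}$ to contain a $\tau_0$-neighbourhood of $w$, hence almost all $u_k$. This is a contradiction.
\end{itemize}
Yet one checks directly that this space validates $\mathsf{J}$.

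So the lemma should be proved by verifying Axioms~\ref{ax:j-4}--\ref{ax:j-6} directly. The instances with $m=n$ follow from L\"ob's axiom. For $m<n$, \ref{ax:j-4} and \ref{ax:j-5} follow from your observation in \eqref{eq:j-space-2}, since full $<_n$-cones are $\tau_n$-open. For \ref{ax:j-6}, start from a $\tau_m$-neighbourhood $V$ of $x$ with $V\setminus\{x\}\subseteq\inter{\phi}$. Shrink $V$ by additionally omitting, at each infinitely branching node, those immediate successors that are $<_n$-below one of the finitely many omitted ones. There are only finitely many such points, since the $<_n$-predecessors of a point form a finite chain.

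The paper also argues through \eqref{eq:j-space-3}, with $B_0,B_1$ built from full $<_m$-cones over the $<_m$-minimal points of $A$. That choice already fails in the first example (its $B_0$ contains $a_0\notin A$), so it is no template for a repair.
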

\proof
We prove this using the characterization given by Theorem \ref{PropositionCharJSpaces}. Obviously, each $<_i$ generates a scattered space due to well-foundedness. Now, in order to show \eqref{eq:j-space-2}, we take $A\subset B$ and consider $d_m A$ for some $m$. Recall that,
\[
  d_m A = \set{x\in B : U\in\tau_m \land x\in U \implies \exists y\ne x, y\in A\cap U }.
\]

Note that Definition~\ref{def:bouquet}, if $U\in \tau_m$, $x\in U$ and $x<_n y$ for $n>m$, then $\set y \cup (U\setminus \set{x})\in \tau_m$, since ${<_m}(x)={<_m}(y)$ by Definition~\ref{def:bouquet}. It follows that $d_m A$ is a union of $<_n$-cones, which are obviously $\tau_n$-open. 

To show \eqref{eq:j-space-3} we take an $m$-open set $U$. Since, $U$ is an $m$-cone, we can take a set $M$ of $<_m$-minimal elements of $U$, then $U = \bigcup_{x\in M} A_x$ where either $A_x = \set{x}\cup\bigcup\set{U_l : l<_m k}$ or $A_x = \set{x}\cup\bigcup\set{U_l : l^* <_m l}$ (as in Definition~\ref{def:bouquet}). One can see then that in both cases $B_1 = \set{y : \exists x\in M (y=x \lor x<_n y)}$ and $B_0 = \set{y : \exists x\in M(x<_m y)}$ are as required.
\endproof

For our completeness proof, it, in fact, suffices to consider $\mathsf{J}$-bouquets which are \textit{small}, in the following sense:
\begin{definition}\label{def:small-bouquet} 
  A $\mathsf{J}$-bouquet $(B, \undertilde <)$ is \textit{small} if for all $i\in\mathbb{N}$, $(B, \undertilde <)$ has at most one infinitely branching $(i+1)$-plane, in which case it is equal to $R^B_i$.
\end{definition}

\begin{lemma}\label{LemmaJBouquetChar}
Suppose that $(B,\undertilde <)$ is a small $\mathsf{J}$-bouquet. Then, for each $i\in\mathbb{N}$, each formula $\langle i\rangle \phi$, and each $x \in B$, we have:
\begin{enumerate}
  \item If $x$ has finitely many $<_i$-successors, then $B, x\Vdash \langle i\rangle \phi$ if and only if $B, y\Vdash \phi\lor\tup{i}\phi$ for some immediate $<_i$-successor of $x$.
\item If $x$ has infinitely many $<_i$-successors, then $B, x\Vdash \langle i\rangle \phi$ if and only if $B, y\Vdash \phi\lor\tup{i}\phi$ for infinitely many immediate $<_i$-successors of $x$.
\end{enumerate}
\end{lemma}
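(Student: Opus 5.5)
We unfold the definition of the derivative $d_{\tau_i}$ on the neighbourhood basis from Definition~\ref{def:bouquet}. Throughout, ``finitely/infinitely many successors'' means immediate $<_i$-successors, since that is the distinction the definition of $\tau_i$ uses. The first step is an auxiliary observation: for any point $y$ and any basic $\tau_i$-neighbourhood $U$ of $y$, every point of $U$ other than $y$ lies in the $<_i$-cone above $y$. Conversely, every point $z$ with $y<_i z$ belongs to every basic neighbourhood of $y$ in the finitely branching case, and to all but those generated by finitely many immediate successors in the infinite case. This follows by induction on the $<_i$-rank of $y$, since basic neighbourhoods are built recursively from neighbourhoods of immediate successors. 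Here we use that $<_i$ is transitive and converse well-founded, and that $B/E_{i+1}$ is a tree under $<_i$, so every $z>_i x$ lies above a unique immediate successor, or is one. A side consequence: $\{y\}$ is $\tau_i$-open iff $y$ has no $<_i$-successors.

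Next, the key lemma: $y\Vdash\phi\lor\tup{i}\phi$ holds iff every $\tau_i$-neighbourhood of $y$ meets $\inter{\phi}$, i.e.\ $y$ lies in the $\tau_i$-closure of $\inter\phi$. This is immediate from the definition of $d$.

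For case (1), let $x_0,\dots,x_k$ be the immediate successors of $x$, so $U=\{x\}\cup\bigcup_l U_l$ with the $U_l$ arbitrary neighbourhoods of the $x_l$. Suppose no $x_l$ satisfies $\phi\lor\tup i\phi$. Then for each $l$ we choose $U_l$ disjoint from $\inter\phi$, and the resulting $U$ witnesses $x\notin d_i\inter\phi$. Conversely, if $x_l\Vdash\phi\lor\tup i\phi$, then every $U$ contains some $U_l$, which meets $\inter\phi$. That point differs from $x$, by irreflexivity, which follows from converse well-foundedness. Hence $x\Vdash\tup i\phi$.

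For case (2), assume $x$ has infinitely many immediate successors $x_l$, enumerated by $\mathbb N$ since the bouquet is countably branching. Suppose only finitely many $x_l$ satisfy $\phi\lor\tup i\phi$. Take $l^*$ beyond all of them and choose each $U_l$ ($l\ge l^*$) to avoid $\inter\phi$; this gives a neighbourhood of $x$ missing $\inter\phi\setminus\{x\}$. Conversely, suppose infinitely many $x_l$ qualify. Any basic neighbourhood contains $U_l$ for all $l\ge l^*$, hence contains some qualifying $x_l$'s neighbourhood, which meets $\inter\phi$. So $x\Vdash\tup i\phi$.

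The main obstacle is checking that general $\tau_i$-open sets behave like the basic ones. We must verify that the recursively defined sets form a neighbourhood basis, i.e.\ that the sets which are neighbourhoods of each of their points are closed under intersection. In particular, we need that the neighbourhood of $x$ used in the arguments above is genuinely a neighbourhood in the generated topology, and that points in different branches do not interfere. Smallness is not needed for this local argument beyond guaranteeing the counting conventions; only the tree structure of $(B/E_{i+1},<_i)$ is used.
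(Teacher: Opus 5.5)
Your proof is correct, but it takes a different route from the paper's. The paper uses smallness to reduce to Kripke semantics. For $x\notin R_i^B$, every point of the $<_i$-cone above $x$ is finitely branching, so every $\tau_i$-neighbourhood of $x$ contains $\{x\}\cup{<_i}(x)$, and clause (1) becomes the relational clause for $\tup{i}$. Clause (2) then only concerns points of the root plane $R_i^B$, where the paper uses the tails $\{x\}\cup\bigcup_{l\ge l^*}U_l$ much as you do.

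You instead use the identity $\inter{\phi\lor\tup{i}\phi}=\mathrm{cl}_{\tau_i}\inter{\phi}$ together with the recursive shape of basic neighbourhoods. In one direction, a neighbourhood $U$ of $x$ with $(U\setminus\{x\})\cap\inter{\phi}=\varnothing$ is assembled from neighbourhoods of the (all, resp.\ cofinitely many) immediate successors that miss $\inter{\phi}$. In the other, every basic neighbourhood of $x$ contains a neighbourhood of some qualifying $x_l$; that neighbourhood can be shrunk into $\{x_l\}\cup{<_i}(x_l)$, so it meets $\inter{\phi}$ away from $x$.

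Your version proves the characterization for arbitrary $\mathsf{J}$-bouquets, reading ``successors'' as immediate successors as you do. The paper's version buys the stronger structural fact that, away from an infinitely branching root plane, the topology of a small bouquet is literally the Kripke one. That is the form in which the lemma is used later, when the paper argues about bouquets as if they were Kripke frames.

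Two side remarks in your first paragraph are inaccurate, though your argument never uses them.
\begin{enumerate}
\item Without smallness, it is false that every $z>_i y$ lies in every basic neighbourhood of a finitely branching $y$. If $y$'s only immediate successor $y_0$ is infinitely branching, a neighbourhood of $y_0$ may omit some successors of $y_0$.
\item A point of the cone need not lie above a \emph{unique} immediate successor. Immediate successors lying in the same $(i+1)$-plane have the same $<_i$-cone, by condition (2) of Definition~\ref{def:j-tree}. So branches can overlap.
\end{enumerate}
Also, what you call the main obstacle is just part of Definition~\ref{def:bouquet} being well posed, and it is a routine induction on rank. The intersection of two basic neighbourhoods $\{x\}\cup\bigcup_{l\ge l^*}U_l$ and $\{x\}\cup\bigcup_{l\ge l'^*}U'_l$ contains $\{x\}\cup\bigcup_{l\ge\max(l^*,l'^*)}(U_l\cap U'_l)$. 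This set is basic by the induction hypothesis, and overlaps between branches do no harm.
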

\proof
By the definition of small bouquet $x\notin R_i^B$ implies that any neighbourhood of $x$ contains ${<_i}(x)\cup\set{x}$, from which the first clause follows. If $x\in R_i^B$ and $x$ has infinitely many successors, then $x\Vdash \tup{i}\phi$ if and only if for each neighbourhood $U$ of $x$ there is $y\in U\setminus\set{x}$ with $y\Vdash \phi$,if and only if for infinitely many $l$ there is $y_l\in U_l$ (we use notation from Definition~\ref{def:bouquet}) with $y_l\Vdash \phi$ and so $x_l\Vdash \phi\lor\tup{i}\phi$.
\endproof

This allows us freely argue about bouquets as if they were a modification of Kripke frames.

\begin{figure}
\begin{center}
% https://tikzcd.yichuanshen.de/#N4Igdg9gJgpgziAXAbVABwnAlgFyxMJZABgBoAWAXVJADcBDAGwFcYkQAnAPQCEQBfUuky58hFAGZSE6nSat2AHUW0oEHAkHDseAkTIAmWQxZtEIAEq8A+gEZlcGDgC2WMMzgACKz2sGBQiAYOmJEBqRGNCYK5gAqXAbWxAHaonqSEcbyZiDxibYpQSK64sjkmVHZ7Hl+hcFppQCsFXKmSopqGnXFoShktllt5j5JDk6u7l4WdrzdIenI4QOVQ7lctklzDURSy60xaxsFWkXzpeV70TnxG-4n9SVEzZdV5sqdmoEPvYukxIMHeLETb3HoLKT-FaArjA45fMHnP4A64w2qgs5PJFQnLvdSfVKPFC2FpXdgAXi2hOQxJeqwp6O2RKx+xy9PhGL6zNJ5h4Yxcbg83hsyQZVLIMmx7VUeIEshgUAA5vAiKAAGYcCDOJAAdhoOAgSFsAE5JeYADxJWY0Rj0ABGMEYAAUEew3NhYIV1ZqkOEQPqdda7Q7nRyQG6sB7TSALbZZicvVrEL7-YhjTR7WAoEhyCaWewLcQ44EE0gyH6DUniPGNYmDGWU77udG-FaQDb7U6XeZwx7+JR+EA
\begin{tikzcd}
B\setminus R^B_0                                                                      & = & T^0_0 & T^0_1  & T^0_2 & \dots \\
R^B_0\setminus R_1^B \arrow[u, "<_0^B" description]                                   & = & T^1_0 & T^1_1  & T^1_2 & \dots \\
R^B_1\setminus R^B_2 \arrow[u, "<_1^B" description] \arrow[uu, "<_0^B", bend left=49] & = & T^2_0 & T^2_1  & T^2_2 & \dots \\
\vdots \arrow[u, "<_2^B" description]                                                 &   &       & \vdots &       &       \\
r^B \arrow[u]                                                                         &   &       &        &       &      
\end{tikzcd}
\end{center}
\caption{A small $\mathsf{J}$-bouquet with root $r^B$.}
\end{figure}

\section{Strong completeness} 
In this section, we prove the main result of this article:

\begin{theorem}\label{thm:bouquet-n}
  Suppose that for some $n\le \omega$, $\Gamma$ is a set of $\lang_n$-formul\ae\  consistent with $\mathsf{J}$. Then there is a small $\mathsf{J}_n$-bouquet $(B, \undertilde <)$ and a valuation $v$ such that $B,r^B\Vdash_v \Gamma$.
\end{theorem}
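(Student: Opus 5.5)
We will build the bouquet as a limit of finite $\mathsf{J}$-trees, following the unimodal $\omega$-bouquet construction of \cite{AG17}, and argue by induction on $n$ (the case $n=\omega$ handled through the finite stages). First we fix a maximal $\mathsf{J}_n$-consistent set $\Gamma^*\supseteq\Gamma$. Since $\Gamma$ is countable, enumerate $\Gamma^*=\{\gamma_0,\gamma_1,\dots\}$. For each $k$, let $\psi_k=\bigwedge_{j\le k}\gamma_j$, restricted to $\lang_k$ when $n=\omega$. Each $\psi_k$ is $\mathsf{J}$-consistent, so Theorem~\ref{TheoremBekJTrees} gives a finite $\mathsf{J}$-tree $T_k$ with $T_k,r^{T_k}\Vdash\psi_k$.

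The trees $T_k$ do not cohere, so we do not glue them directly. Instead we build the root planes $R^B_i$ canonically and hang the finite trees above them.

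\textbf{The root $r^B$.} This point carries the type $\Gamma^*$. We treat it as in \cite{AG17}: for each $i<n$ we must ensure that $r\Vdash\tup i\phi$ exactly when $\tup i\phi\in\Gamma^*$. By Lemma~\ref{LemmaJBouquetChar}(2), a formula true at $r$ must hold at \emph{infinitely many} immediate $<_i$-successors. Also, $[i]\phi\in\Gamma^*$ only forces $\phi$ at all but finitely many successors, since finitely many early successors may fail. So for each $i$ we attach an $\omega$-sequence of $<_i$-successors $x^i_0,x^i_1,\dots$. The $k$-th successor is chosen as a finite $\mathsf{J}$-tree $S^i_k$ realising $\bigwedge\{\phi\lor\tup i\phi\}$ for the first $k$ formulas with $\tup i\phi\in\Gamma^*$ (one for each, cycling so that each such $\phi$ occurs infinitely often), together with $\bigwedge_{j\le k}\{\phi\wedge[i]\phi : [i]\phi\in\Gamma^*\}$. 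Consistency of each such finite requirement is the key syntactic fact. It follows from $\Gamma^*$ being $\mathsf{J}$-consistent, using axiom~\ref{ax:j-3} to pick $<_i$-minimal witnesses, and axioms~\ref{ax:j-4}–\ref{ax:j-6} to transfer the $[m]$- and $\tup m$-information for $m\ne i$. Concretely, $\tup m$-formulas with $m<i$ should be inherited by the $<_i$-successors, which is axiom~\ref{ax:j-5} and frame condition~\eqref{eq:j-tree-2}. The $[m]$-formulas with $m\le i$ pass by axioms~\ref{ax:j-4} and~\ref{ax:j-6}.

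\textbf{Assembling $B$.} We place these finite trees so that the resulting structure is a small $\mathsf{J}_n$-bouquet. All infinite branching must occur within the planes $R^B_i$, with the hanging trees finite. Frame condition~\eqref{eq:j-tree-2} requires that a $<_i$-successor of $r$ has the same $<_m$-successors as $r$ for $m<i$. So the $R^B_i$ are nested, $R^B_{i+1}\subset R^B_i$, and the plane structure of Figure~1 arises. For $n=\omega$, the root plane for level $i$ is constructed inside $R^B_{i-1}$ by the same recipe, applied recursively with "$r$" replaced by the root of $R^B_{i-1}$. This recursion gives the inductive structure on $n$.

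\textbf{Truth lemma.} We then prove, by induction on formulas, that each point of $B$ satisfies its intended type. At points of finite trees this uses the finite satisfaction in $T_k,S^i_k$ and Lemma~\ref{LemmaJBouquetChar}(1). At points of the root planes it uses Lemma~\ref{LemmaJBouquetChar}(2) together with the cofinal requirements: a formula $\phi$ holds at infinitely many successors iff $\tup i\phi\in\Gamma^*$. For the "only if" direction we use that $\neg\tup i\phi\in\Gamma^*$ gives $[i]\neg\phi$, which is imposed at all successors $x^i_k$ for $k$ large.

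\textbf{Main obstacle.} The hardest step is the interaction between modalities at the root planes. We must ensure that attaching the $<_i$-successors does not disturb the truth of $\tup m$- and $[m]$-formulas for $m\neq i$. We also need the points of $R^B_i$ to have types that are themselves coherent maximal consistent sets. These must agree with $\Gamma^*$ on $\tup m$ for $m<i$ and be compatible with the infinitely many $<_m$-successors above them. We would first try to define the types of all points in the root planes as maximal consistent sets extending the required finite data. We would then verify the frame conditions~\eqref{eq:j-tree-2}–\eqref{eq:j-tree-3} syntactically, via axioms~\ref{ax:j-5} and~\ref{ax:j-6}, before inserting the finite trees.
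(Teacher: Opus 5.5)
There is a genuine gap. The step you defer as the ``main obstacle'' is the heart of the proof, and the fix you sketch does not work.

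By condition~\eqref{eq:j-tree-2}, once the root of a finite tree $S$ is placed $<_i$-above $r$, it must have exactly the same $<_m$-successors as $r$ for every $m<i$. So $S$ cannot simply be hung above a pre-built root plane. Its planes $R^S_m$ ($m<i$) must be merged with $R^B_m$ in both directions:
\begin{itemize}
\item $r$, and with it all of $R^B_m$, acquires the $<_m$-successors that $R^S_m$ has inside $S$;
\item the points of $R^S_m$ acquire the infinitely many $<_m$-successors of $r$.
\end{itemize}
This has three consequences for your plan. First, the $<_i$-successors of $r$ are not just your $x^i_k$; they also include the $<_i$-successors of the roots of all trees attached at modalities $j>i$. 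Second, your truth lemma (``each point of $B$ satisfies its intended type'', via finite satisfaction in $S^i_k$) is false at the merged points, whose truth values change. Third, giving every root-plane point a maximal consistent type is not viable in a small bouquet. A point $x\in R^B_i\setminus R^B_{i+1}$ has only finitely many $<_{i+1}$-successors, so $x\Vdash[i+1]^K\bot$ for some $K$. Hence $x$ cannot realise a maximal consistent extension containing every $\tup{i+1}^k\top$. You give no way of choosing types that are consistent, realisable by finite structure in the higher modalities, and compatible with the infinite lower-modality structure inherited from $r$.

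As written, your successor requirement is also inconsistent in general. The conjunction $\bigwedge(\phi\lor\tup{i}\phi)$ over several $\tup{i}\phi\in\Gamma^*$ clashes with the box data: if $\tup{i}p,\tup{i}\neg p,[i][i]\bot\in\Gamma^*$, the successor must satisfy $[i]\bot$, so it would need $p\wedge\neg p$. Each tree must carry a single witness, as in $\Gamma'(i)$.

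The missing idea is the paper's construction, which has no canonical root planes and no maximal consistent types except at $r$. It sets $B=\{r\}\cup\bigcup_i T_i$ with finite trees satisfying $T_i,r^{T_i}\Vdash\Gamma(i)$. Besides the single witness $\psi_i$ and the relevant $\phi_j\wedge[n_i]\phi_j$, each $\Gamma(i)$ contains:
\begin{itemize}
\item all lower-modality data $[n_j]\phi_j\wedge\tup{n_j}\psi_j$ for $n_j<n_i$;
\item formulas $[k]\bot$ for $k\in N^\bot$;
\item the extra box formulas of~\ref{eq:Gamma-i}.
\end{itemize}
Consistency follows by showing $\Gamma\vdash\tup{n_i}\bigwedge\Gamma(i)$ with axioms~\ref{ax:j-4}--\ref{ax:j-6}. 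These extra formulas are what make the merging harmless; for example, the $[k]\bot$ prevent new $<_k$-successors of $r$ (Claim~\ref{claim:empty-root}). The gluing rule~\ref{eq:rel-closure} performs exactly the two-way merging of planes described above and keeps every $B_i$ a finite $\mathsf J$-tree.

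Finally, the truth lemma is proved only at $r$, through a local agreement claim. For a fixed $\phi$ there is $i^*$ such that, for all $i>i^*$, every point of $T_i$ gives subformulas of $\phi$ the same truth values in $T_i$ as in $B$. This suffices by Lemma~\ref{LemmaJBouquetChar}: neighbourhoods of $r$ only see cofinitely many immediate successors, so the finitely many early trees where agreement may fail do not matter.
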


To prove the theorem, let $\Gamma$ be a set of formul\ae\  consistent with $\mathsf{J}$. To simplify the proof we prove the case with $n=\omega$, one can trivially adapt the proof for a finite $n$.
By extending it if necessary, we may assume that $\Gamma$ is maximal consistent. Throughout the proof, we assume that all formulae are presented in the negation normal form.

The proof goes as follows. In Subsection~\ref{SubsectionGamma} we define finite sets $\Gamma(i)$ of formul\ae\ which approximate $\Gamma$ and we show their consistency. In Subsection~\ref{SubsectionBouquet}, using the consistency and finiteness of the previously defined sets of formul\ae\, we amalgamate the finite models of these sets $T_i$, and show that the resulting model $B$ is a $\mathsf{J}$-bouquet. Finally, in Subsection~\ref{SubsectionTruth}, we show that $B$ satisfies $\Gamma$.

\subsection{Slicing $\Gamma$}\label{SubsectionGamma} The first step of our proof is to define finite set of formul\ae\ $\Gamma(i)$ for each $i$ and show its consistency.

We let $N^\bot=\set{n<\omega: [n]\bot\in \Gamma}$ and $N^\top=\set{n<\omega:\tup n\top\in \Gamma}$, note that $N^\bot \sqcup N^\top=\omega$. Fix an enumeration $(\psi_i)_{i<\omega}$ of all formul\ae\  such that $\tup{n_i}\psi_i\in\Gamma$ in which
each $\psi_i$ occurs infinitely often. Using the same indexing $i\mapsto n_i$, we fix an enumeration $(\phi_i)_{i<\omega}$ of all formul\ae\  such that $[n_i]\phi_i\in\Gamma$ and $\langle n_i\rangle \top \in \Gamma$ (for $n$ such that $\langle n\rangle \top \not\in \Gamma$, $n$ does not belong to the range of the mapping $i\mapsto n_i$, and $[n]\psi$ never gets added to the enumeration). We start by defining sets of formul\ae\:
\begin{equation*}\label{eq:Gamma-i-prime}
\begin{aligned}
    \Gamma'(i) = \set{\psi_i}\cup &\Big\{\phi_j\wedge[n_j]\phi_j: j<i, n_j = n_i\Big\}\\
\cup& \Big\{[n_j]\phi_j\land \tup{n_j}\psi_j : j<i, n_j < n_i\Big\}
\end{aligned}
\end{equation*}
Each $\Gamma(i)$ is finite, so for each $i$ there is a finite $\mathsf{J}$-tree $S_i$ with root $s_i$ such that $S_i,s_i\Vdash\Gamma'(i)$. If we let $S = \set{r}\cup\bigcup_{i<\omega} S_i$, where $r$ is the root below all $S_i$, then $S,r\Vdash \Gamma$. However, the model $S$ need not be a model of $\mathsf{J}$ and an additional closure is required. To ensure that this closure does not violate the indented satisfaction, we need to extend $\Gamma'(i)$. For each $i$, we define the maximal index of a modal operator appearing in $\Gamma'(i)$. Setting $m_{-1}=0$, we let:
\begin{align*}\tag*{($\bullet$)}\label{eq:m_i}
  m_i=\max\set{m :\ & m=m_{i-1}\text{, or }\\&m=n_i \text{, or }\\& \exists \theta ([m]\theta\in\sub{\Gamma'(i)}\lor \tup m\theta\in\sub{\Gamma'(i)}},
\end{align*}
We define the following sets of formul\ae\  for each $i\leq \omega$:
  \begin{align*}\tag*{($\star$)}\label{eq:Gamma-i}
    \Gamma(i) = \Gamma'(i)\cup &\Big\{ [m]\phi_j \land [n_j][m]\phi_j : j<i, n_j \le n_i, n_j\le m\le m_i\Big\}\\
\cup& 
\Big\{[k]\bot  : k\in N^\bot\text{, and }k < m_i\Big\}.
\end{align*}
The goal now is to build a $\mathsf{J}$-bouquet by amalgamating models for each $\Gamma(i)$.  This amalgamation is not simply their union, however, as the definition of a $\mathsf{J}$-bouquet will impose non-trivial interactions between the models, the formulas in $\Gamma(i)\setminus\Gamma'(i)$ help us retain validity through these interactions. We begin with a claim:

%  where $(\psi_i)_{i<\omega}$ is an enumeration of all formul\ae, such that $\tup{n_i}\psi_i\in\Gamma$ and
%  each $\psi_i$ occur infinitely often, $(\phi_i)_{i<\omega}$ enumerates all formul\ae\  such that 
%  $[n_i]\phi_i\in\Gamma$.
%\end{definition}

%\begin{proof}[Outline of the proof of Theorem \ref{thm:bouquet-n}]
%    Given a $\Gamma \subset \mathcal L$ we define sets of formul\ae\  as 
%  follows: we take sets $\Gamma(i)$ (as defined in Def.~\ref{def:gamma-i}) for all $i<\omega$. Provided that $\Gamma(i)$ is $J$-consistent, each $\Gamma(i)$ has a model $(T_i,\mathbf R_i,v_i)$ with a root $r_i$.
%  Then we add a root $r$ and for each $T_i$ we add $\tup{r,r_i}$ to the
%  $<_{n_i}$ and close the relations under $J$ rules. The last step is to show that this model satisfies $\Gamma$ exactly in the point $r$.
%\end{proof}

%\begin{proof}[Proof of Theorem \ref{thm:bouquet-n}]

%    Following the outline we start with the following:

      \begin{claim}\label{ClaimJStrCompCons}
        $\Gamma(i)$ is $\mathsf{J}$-consistent for each $i<\omega$.
      \end{claim}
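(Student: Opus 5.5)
The plan is to show that $\tup{n_i}\bigwedge\Gamma(i)\in\Gamma$. Since $\Gamma$ is maximal consistent, this gives $\mathsf{J}\not\vdash\neg\bigwedge\Gamma(i)$, because otherwise necessitation would yield $[n_i]\neg\bigwedge\Gamma(i)$, contradicting $\tup{n_i}\bigwedge\Gamma(i)\in\Gamma$. The membership is obtained by collecting finitely many formul\ae\ $[n_i]\chi$ that lie in $\Gamma$, together with the single formula $\tup{n_i}\psi_i\in\Gamma$. We then combine them with the $\mathsf{J}$-theorem
\[
  [n]\chi_1\land\dots\land[n]\chi_k\land\tup{n}\psi\to\tup{n}(\psi\land\chi_1\land\dots\land\chi_k).
\]
So it suffices, for each conjunct $\theta$ of $\Gamma(i)$ other than $\psi_i$, to find a formula $[n_i]\theta'\in\Gamma$ with $\mathsf{J}\vdash\theta'\to\theta$.

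I would go through the conjuncts by type.

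\textbf{Case $j<i$, $n_j=n_i$: the conjunct $\phi_j\land[n_j]\phi_j$.} Since $[n_j]\phi_j\in\Gamma$, Axiom~\ref{ax:j-4} with $m=n=n_i$ gives $[n_i][n_i]\phi_j\in\Gamma$. Using normality, $[n_i]\phi_j\land[n_i][n_i]\phi_j$ only gives $[n_i]([n_i]\phi_j)$, not $[n_i]\phi_j$ under the box. To get $\phi_j$ under the box, I instead use the standard $\mathsf{GL}$ trick of working with $\tup{n_i}(\psi_i\land\dots)$ and choosing a maximal-rank witness. Concretely, $\mathsf{GL}$ proves $\tup{n}\psi\to\tup{n}(\psi\land[n]\neg\psi)$, and $[n]\phi\to[n][n]\phi$. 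This yields $\tup{n_i}(\psi_i\land[n_i]\phi_j)$, which contains the $[n_j]\phi_j$ part; the $\phi_j$ part is handled as follows.

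Better: $[n]\phi\land\tup n\psi\to\tup n(\psi\land\phi\land[n]\phi)$ is derivable in $\mathsf{K4}$, since $[n](\phi\land[n]\phi)$ follows from $[n]\phi$ by transitivity (Axiom~\ref{ax:j-4}). So I take $\theta'=\phi_j\land[n_j]\phi_j$ directly.

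\textbf{Case $j<i$, $n_j<n_i$: the conjunct $[n_j]\phi_j\land\tup{n_j}\psi_j$.} Axiom~\ref{ax:j-4} gives $[n_i][n_j]\phi_j\in\Gamma$, and Axiom~\ref{ax:j-5} gives $[n_i]\tup{n_j}\psi_j\in\Gamma$.

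\textbf{The conjuncts $[m]\phi_j\land[n_j][m]\phi_j$ with $n_j\le n_i$ and $n_j\le m\le m_i$.} From $[n_j]\phi_j$, Axiom~\ref{ax:j-6} gives $[n_j][m]\phi_j$. Applying Axiom~\ref{ax:j-4} (with $m\ge n_j$) then gives $[m][n_j][m]\phi_j$ and $[n_j][n_j][m]\phi_j$. What is actually needed is to push these under $[n_i]$: we need $[n_i][m]\phi_j$ and $[n_i][n_j][m]\phi_j$. Axiom~\ref{ax:j-4} applied to $[n_j][m]\phi_j$ with $n_j\le n_i$ gives $[n_i][n_j][m]\phi_j$. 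For $[n_i][m]\phi_j$, split into two cases:
\begin{itemize}
\item If $m\le n_i$, apply Axiom~\ref{ax:j-4} to $[m]\phi_j$, which itself comes from $[n_j]\phi_j$ via~\ref{ax:j-6}.
\item If $m>n_i$, use Axiom~\ref{ax:j-6} in the form $[n_i]\phi\to[n_i][m]\phi$. This requires $[n_i]\phi_j$. That is available when $n_j=n_i$, and when $n_j<n_i$ via~\ref{ax:j-6}/\ref{ax:j-4}: from $[n_j]\phi_j$ we get $[n_j][n_i]\phi_j$, but to get $[n_i][n_i]\phi_j$ we must be careful.
\end{itemize}
I expect exactly this index bookkeeping, namely whether $[n_j]\phi_j$ yields $[n_i][m]\phi_j$ for every $m$ in the range, to be the main obstacle. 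The intended route is $[n_j]\phi_j\to[n_j][m]\phi_j$ (by~\ref{ax:j-6}) followed by $\to[n_i][n_j][m]\phi_j$ (by~\ref{ax:j-4}), and I would rework the inner conjunct accordingly, possibly by using $[n_j][m]\phi_j\to[m]\phi_j$ inside the box only where reflexivity is not needed.

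\textbf{The conjuncts $[k]\bot$ with $k\in N^\bot$.} If $k\ge n_i$, then $[k]\bot\in\Gamma$ and $\tup{n_i}\top\in\Gamma$ are compatible, and Axiom~\ref{ax:j-4} gives $[n_i][k]\bot$. If $k<n_i$, I would use Axiom~\ref{ax:j-5} contrapositively: $\neg[n_i]\tup k\top\to\neg\tup k\top$, which gives $\tup{n_i}[k]\bot$ from $[k]\bot$. Combining this with the boxes requires care. Alternatively, Axiom~\ref{ax:j-4} yields $[k]\bot\to[n_i][k]\bot$ for $k\le n_i$ anyway, which handles this case directly.
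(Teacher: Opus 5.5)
\textbf{There is a genuine gap, and it cannot be closed as stated.}

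Your overall strategy is the paper's: show $\tup{n_i}\bigwedge\Gamma(i)\in\Gamma$ by collecting $[n_i]$-boxed consequences of $\Gamma$ and combining them with $\tup{n_i}\psi_i$. Several of your cases are correct:
\begin{itemize}
\item the conjuncts of $\Gamma'(i)$;
\item the conjunct $[n_j][m]\phi_j$ (Axiom~\ref{ax:j-6}, then Axiom~\ref{ax:j-4});
\item the conjunct $[k]\bot$ for $k<n_i$ (Axiom~\ref{ax:j-4}).
\end{itemize}

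The gap is the conjunct $[m]\phi_j$ when $n_j<n_i$ and $m>n_j$. For $m\le n_i$ you derive $[m]\phi_j$ from $[n_j]\phi_j$ ``via Axiom~\ref{ax:j-6}''. That axiom only gives $[n_j][m]\phi_j$; the monotonicity $[n_j]\phi\to[m]\phi$ is exactly the $\glp$ axiom that $\mathsf{J}$ lacks. For $m>n_i$ you leave the step open.

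Neither subcase can be repaired. On $\{r,a,y,z\}$ put ${<_0}=\{(r,a),(y,a),(z,a)\}$ and ${<_1}=\{(r,y),(r,z),(y,z)\}$, with all other relations empty, and let $p$ hold only at $a$. Each $<_\ell$ is a finite strict order and the frame conditions of Axioms~\ref{ax:j-4}--\ref{ax:j-6} hold, so this frame validates $\mathsf{J}$. Moreover $r\Vdash\tup{0}\top\wedge[0]p\wedge\tup{1}\tup{1}\neg p$.

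Take a maximal $\Gamma$ containing this formula. There are $j<i$ with $(n_j,\phi_j)=(0,p)$ and $(n_i,\psi_i)=(1,\tup{1}\neg p)$. Since $m_i\ge n_i=1$, the clause with $m=1$ puts $[1]p$ into $\Gamma(i)$ beside $\psi_i=\tup{1}\neg p$, so $\Gamma(i)$ is inconsistent. Replacing the edge $y<_1z$ by $y<_2z$, and $\tup{1}\tup{1}\neg p$ by $\tup{1}\tup{2}\neg p$, gives the same failure with $m=2>n_i$.

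The $[k]\bot$ case with $k>n_i$ fails the same way. You apply Axiom~\ref{ax:j-4} in the wrong direction: it needs $k\le n_i$. The frame $r<_0y<_1z$, $r<_0z$ shows that $[1]\bot\wedge\tup{0}\tup{1}\top$ is $\mathsf{J}$-consistent, so $[n_i][k]\bot$ is not available there either.

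The paper's proof does not contain the missing idea either. It asserts $\Gamma\vdash[n_i]([m]\phi_j\wedge[n_j][m]\phi_j)$ for all $n_j\le n_i\le m$. This holds only when $n_j=n_i$ (Axiom~\ref{ax:j-6} plus transitivity), and the first countermodel above (with $m=n_i=1$) refutes it directly. The paper also treats $[k]\bot$ only for $k<n_i$, which is all that Claim~\ref{claim:empty-root} uses, although the definition of $\Gamma(i)$ lists all $k<m_i$.

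So your suspicion about the index bookkeeping was justified: the claim as stated is false. What can be proved is the consistency of a trimmed $\Gamma(i)$, for example:
\begin{itemize}
\item keep $[m]\phi_j$ only when $n_j=n_i$, while keeping $[n_j][m]\phi_j$ for all $n_j\le n_i$;
\item keep $[k]\bot$ only for $k<n_i$.
\end{itemize}
For that version your argument, with the two faulty subcases removed, is complete and agrees with the paper's. Note, though, that the Truth Lemma later appeals to $[m]\chi\in\Gamma(j)$, so the rest of the construction would need rechecking against the trimmed definition.
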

      
      \begin{proof}
It suffices to show that $\Gamma \vdash \tup {n_i}\bigwedge\Gamma(i)$. Towards a contradiction assume that $\Gamma \vdash [n_i]\bigvee \neg\Gamma(i)$.
First, using the axioms we obtain
$\Gamma \vdash [n_j]\phi_j, \Gamma \vdash [n_i][n_j]\phi_j$ for $j\le i$, $n_j\le n_i$. Next, $\Gamma\vdash [n_i]\tup{n_j}\psi_j$ for $j<i$ and $n_j<n_i$,  then also $\Gamma\vdash [n_i][k]\bot$ for $k< n_i$ with $k\in N^\bot$. Finally, $\Gamma\vdash [n_i]([m]\phi_j\land [n_j][m]\phi_j)$ for each $j\le i$, $n_j\le n_i$ and $ m\ge n_i$. Hence,
\begin{align*}
  \Gamma \vdash
  [n_i]\Big(&\bigwedge_{j<i,n_j=n_i}(\phi_j\land[n_j]\phi_j)\land
    \bigwedge_{j<i,n_j<n_i}(\tup{n_j}\psi_j\land[n_j]\phi_j) \land\\
&  \bigwedge_{k<n_i,k\in N^\bot}[k]\bot \land \bigwedge_{j<i, n_j\le n_i, n_j\le m\le m_i}[m]\phi_j \land [n_j][m]\phi_j \Big).\end{align*}
      Together with the assumption $\Gamma \vdash [n_i]\bigvee \neg\Gamma(i)$ we get $\Gamma\vdash[n_i]\neg\psi_i$, note that $\tup{n_i}\psi_i\in\Gamma$, a contradiction.
      \end{proof}

      \subsection{Constructing the bouquet}\label{SubsectionBouquet}

      \begin{claim}
        There are an $n<\omega$ and a finite $\mathsf{J}_n$-tree $\tup{T_i, \undertilde <^{T_i}}$ with an evaluation $v_i$ such that $T_i,r^{T_i}\Vdash \Gamma(i)$
        for each $i<\omega$. In fact, we can take $n=m_i$ (see \ref{eq:m_i}).
      \end{claim}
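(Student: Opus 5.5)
The claim follows from Claim~\ref{ClaimJStrCompCons} together with Beklemishev's finite-model completeness, Theorem~\ref{TheoremBekJTrees}, once we pass to a finite language. Fix $i<\omega$. The set $\Gamma(i)$ is finite, and by the choice of $m_i$ in \ref{eq:m_i} every modality occurring in $\Gamma'(i)$ has index at most $m_i$. The formul\ae\ added in \ref{eq:Gamma-i} use only the indices $m\le m_i$, $n_j\le n_i\le m_i$ and $k<m_i$. Hence $\Gamma(i)$ consists of $\lang_{m_i+1}$-formul\ae. So that the statement can literally say $n=m_i$, we adopt the convention that $\mathsf{J}_{m_i}$ carries the modalities $[0],\dots,[m_i]$; this convention is fixed once for the rest of the construction.

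The key step is to check that $\bigwedge\Gamma(i)$ is consistent not only with $\mathsf{J}$ but with the restricted logic $\mathsf{J}_{m_i}$. Since $\mathsf{J}_{m_i}\subseteq\mathsf{J}$, this is immediate: if $\mathsf{J}_{m_i}\vdash\neg\bigwedge\Gamma(i)$, then $\mathsf{J}\vdash\neg\bigwedge\Gamma(i)$, contradicting Claim~\ref{ClaimJStrCompCons}. Thus $\mathsf{J}_{m_i}\nvdash\neg\bigwedge\Gamma(i)$.

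By the completeness half of Theorem~\ref{TheoremBekJTrees}, there is a finite $\mathsf{J}_{m_i}$-tree $T$ with a valuation $v$ and a point $x\in T$ such that $T,x\Vdash_v\bigwedge\Gamma(i)$. The claim, however, requires truth at the hereditary root. So let $T_i$ be the subframe of $T$ generated by $x$, that is, $x$ together with every point reachable from $x$ by finite chains of the relations $<_0,\dots,<_{m_i}$, and let $v_i$ be the restriction of $v$.

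The step most likely to need care is confirming that $T_i$ is again a $\mathsf{J}_{m_i}$-tree with hereditary root $x$. Three things must be checked:
\begin{itemize}
\item[(a)] Converse well-foundedness and conditions \eqref{eq:j-tree-2}--\eqref{eq:j-tree-3} of Definition~\ref{def:j-tree} are universal statements about points and their successors, so they are inherited by any upward-closed subset.
\item[(b)] In a generated subframe the relations are restricted, and the planes of $T_i$ are unions of traces of planes of $T$. The quotients $T_i/E_{j+1}$ with $<_j$ are then the parts of trees lying above a point, hence trees.
\item[(c)] The point $x$ has no $<_j$-predecessor inside $T_i$, because $T$ is converse well-founded and transitivity gives acyclicity. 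Every point of $T_i$ is reachable from $x$, so $x=r^{T_i}$.
\end{itemize}
If (b) proves delicate, one can instead appeal to the standard fact that Beklemishev's completeness proof produces the model rooted at the satisfying point, which amounts to the same thing.

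Finally, satisfaction of modal formul\ae\ is preserved under generated subframes, since $\inter{\tup j\phi}$ only inspects successors. Therefore $T_i,r^{T_i}\Vdash_{v_i}\Gamma(i)$, with $T_i$ finite and the number of modalities equal to $m_i$ under the indexing convention above. This proves the claim.
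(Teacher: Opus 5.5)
Your route is the paper's: its proof is just the appeal to Claim~\ref{ClaimJStrCompCons} and Theorem~\ref{TheoremBekJTrees}. The following parts of your proposal are fine:
\begin{itemize}
\item the passage to a finite language, including your correct observation that $\Gamma(i)$ contains $[m_i]\phi_j$, so in the paper's indexing one really needs $\mathsf{J}_{m_i+1}$;
\item the inference from $\mathsf{J}$-consistency to $\mathsf{J}_{m_i}$-consistency;
\item steps (a) and (c);
\item preservation of truth under generated subframes.
\end{itemize}

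The step that fails as argued is (b). In a generated subframe, the planes can be strictly finer than the traces of the planes of $T$, because two points of one plane may be linked only through points outside the subframe. The quotient of the subframe is then not a piece of the quotient of $T$, and it need not be a tree.

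Here is a concrete instance with $<_0,<_1$. Take points $z,x,p_0,p_1,p_2,q$, and set
\begin{itemize}
\item $p_0<_1p_1$ and $p_0<_1p_2$;
\item $z<_0$ every other point;
\item $x<_0p_1,p_2,q$;
\item $p_0,p_1,p_2<_0q$.
\end{itemize}
This is a finite $\mathsf{J}_2$-tree with hereditary root $z$. Its $1$-planes $\set{z},\set{x},\set{p_0,p_1,p_2},\set{q}$ form a $<_0$-chain, and the middle plane is a $<_1$-tree. However, the subframe generated by $x$ is $\set{x,p_1,p_2,q}$, which has no $<_1$-arrows. So its $1$-planes are singletons, and $\set{q}$ has the two $<_0$-incomparable predecessors $\set{p_1}$ and $\set{p_2}$. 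That is not a tree.

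So drop the argument in (b) and use your fallback: the rooted form of Theorem~\ref{TheoremBekJTrees}, namely that a $\mathsf{J}_n$-consistent formula holds at the hereditary root of some finite $\mathsf{J}_n$-tree. In the example above this is easy to arrange, for instance by splitting $q$ into one copy above $p_1$ and one above $p_2$. In general the rooting must respect planes, and a plain restriction to the generated subframe does not. The rooted form is exactly what the paper tacitly uses when it calls the claim immediate. With that replacement, your proof coincides with the paper's, only with more of the routine details written out.
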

\proof
Immediate from Claim \ref{ClaimJStrCompCons} and Theorem \ref{TheoremBekJTrees}.
\endproof

Now we proceed to the construction of a $\mathsf{J}$-bouquet $B$  such that $B,r^B\Vdash \Gamma$. The construction is by stages. We begin with
 $B_{-1} := \set{r}$.
Inductively, $B_i$ is defined by putting $B_i = B_{i-1} \cup T_i$. We still need to define the relations $\undertilde <^{B_i}$. 
The idea for this is that we would like to place $T_i$ precisely $<_{n_i}$-above $r^B$. However, we must add more arrows in order to ensure that the result is a $\mathsf{J}$-tree. We do so according to the following case distinction
(see Figures \ref{FigureDefRelationsBouquet1}--\ref{FigureDefRelationsBouquet3}).
\begin{align*}\tag*{(\#)}\label{eq:rel-closure}
<^{B_i}_\ell = 
\begin{cases}
        <^{B_{i-1}}_\ell \cup <_\ell^{T_i} \cup\, \Big( R_{\ell}^{B_{i-1}} \times R_{\ell-1}^{T_i}\setminus R_\ell^{T_i} \Big)\\\quad\quad\cup\,\Big( R_\ell^{T_i} \times R_{\ell-1}^{B_{i-1}}\setminus R_\ell^{B_{i-1}} \Big),& \text{ if } \ell < n_i;\\
        <^{B_{i-1}}_\ell\cup <_\ell^{T_i} \cup\, \Big(R_{n_i}^{B_{i-1}} \times R_{n_i-1}^{T_i}\Big) ,& \text{ if } \ell = n_i;\\
        <^{B_{i-1}}_\ell\cup <_\ell^{T_i} ,& \text{ if } \ell > n_i;\\
\end{cases}
\end{align*}

\begin{figure}
\begin{center}
% https://tikzcd.yichuanshen.de/#N4Igdg9gJgpgziAXAbVABwnAlgFyxMJZABgBoBmAXVJADcBDAGwFcYkQAlAfWACsBfAHrAAQ\ellywBaAIz9+AHTlwYOALZYwzOAAJuwMFyxDR46bJD9S6TLnyEUAFlJTqdJq3bdewgCoH5i5TUNbV19Qx8-c0sQDGw8AiIpUmIXBhY2RBAxYEkZBSVVdU0dLi9\ellHNN+KKs42yIAJidUtwzOHl5TCMN8wKKQ0q6qixqbBJRG+2b0\ellx4wo2zcs2GY63i7ZEcqG\ellT3TNC-QfMXGCgAc3giUAAzACcIFSRHEBwIJDIQRnoAIxhGAAVVnVMupsLAQNsWuwADyzA7lRZDaK3e6PGgvJAAVgh00yML0kRonx+-0BYxAIKwYOWyIeiCxz1eiCSrhxIDxc0OhO+vwBtTJFKpSLutOZ6MQ5GxuzZA2AvkM4I+3JJfLs5LAoLY1OFbzR\ellMaLKlMLKC0qCqJPNJqoFmqFKMQ7zFEsVxN5oyt6spbBoPzAUB1BtaRuqIBpSFFeu9MF9-vNyrd7GtCp2gdKR34QA
\begin{tikzcd}
                                                                                                                     & B_{i-1}\setminus R_\ell^{B_{i-1}} &                                                                                                   &  &                                                                                                      \\
                                                                                                                     &                                & R_{\ell-1}^{T_i}\setminus R_\ell^{T_i}                                                                  &  & R_\ell^{T_i}\setminus R_{n_i}^{T_i} \arrow[ll, "<_\ell^{T_i}" description] \arrow[lllu, "<_\ell" description] \\
                                                                                                                     &                                &                                                                                                   &  &                                                                                                      \\
R_{\ell}^{B_{i-1}}\setminus R_{n_i}^{B_{i-1}} \arrow[ruuu, "<_\ell^{B_{i-1}}" description] \arrow[rruu, "<_\ell" description] &                                &                                                                                                   &  & R_{n_i}^{T_i} \arrow[uu, "<_{n_i}^{T_i}" description]                                                \\
                                                                                                                     &                                & R_{n_i}^{B_{i-1}} \arrow[llu, "<_{n_i}^{B_{i-1}}" description] \arrow[rru, "<_{n_i}" description] &  &                                                                                                     
\end{tikzcd}
\end{center}
\caption{Definition of $<^{B_i}_\ell$ when $\ell < n_i$.}\label{FigureDefRelationsBouquet1}
\end{figure}
\begin{figure}
\begin{center}
% https://tikzcd.yichuanshen.de/#N4Igdg9gJgpgziAXAbVABwnAlgFyxMJZARgBoAWAXVJADcBDAGwFcYkQAlAfWACsBfAHrAAQ\ellywBaYv34h+pdJlz5CKAEyk11Ok1btufIcAAqXLLPmLseAkQ0AGbQxZtEnHrylHT5gDq+4GBwAWywwZ\ellgAAgMBYR8LBRAMaxUie00nXVcQMWBJaX9AkLCI6I8\ellXPyZOW0YKABzeCJQADMAJwhgpHSQHAgkAGYaZz03AB4uXmFKr1kaRnoAIxhGAAUlG1UQMOxYOUT2zu6aPqQyEAXltY3Utx2sPeGs9gneEBoACxh6KCRwxkYlhAhy6iHOp0QGguYWycAg\ellAe7xAXx+f2YAJO9Cw\ellHYkDAbHmSxW6xSt\elluYF2BJ0LhekziZgslH4QA
\begin{tikzcd}
                                 &                                                                                         & R_{\ell-1}^{T_i}\setminus R_{\ell}^{T_i}              \\
                                 &                                                                                         &                                                 \\
B_{i-1}\setminus R_{\ell}^{B_{i-1}} &                                                                                         & R_{\ell}^{T_i} \arrow[uu, "<_\ell^{T_i}" description] \\
                                 &                                                                                         &                                                 \\
                                 & R_{\ell}^{B_{i-1}} \arrow[luu, "<_\ell^{B_{i-1}}" description] \arrow[ruu, "<_\ell" description] &                                                
\end{tikzcd}
\end{center}
\caption{Definition of $<^{B_i}_\ell$ when $\ell = n_i$.}\label{FigureDefRelationsBouquet2}
\end{figure}
\begin{figure}
\begin{center}
% https://tikzcd.yichuanshen.de/#N4Igdg9gJgpgziAXAbVABwnAlgFyxMJZABgBpiBdUkANwEMAbAVxiRACEB9YLAWgEYAvgB1hcGDgC2WMEzgACAEqcAVgD1gXHgMGCQg0uky58hFP3JVa\ellFmwAqnLKPFSZcpao0Oseg0ex4BERkAExW9MysiCDK6prcfEK+hiAYAaZEFmHUEbbRsV6OvlYwUADm8ESgAGYAThCSSCHUOBBIZNaRbAA8nvHaSSDUDHQAR\ellAMAArGgWYgMtiw+il1DUgAzC1tiBadeSC9cd56w2MT0+lB0QtYS4IUgkA
\begin{tikzcd}
B_{i-1}\setminus R_\ell^{B_{i-1}}                        & T_i\setminus R_\ell^{T_i}                        \\
                                                      &                                               \\
R_\ell^{B_{i-1}} \arrow[uu, "<_\ell^{B_{i-1}}" description] & R_\ell^{T_i} \arrow[uu, "<_\ell^{T_i}" description]
\end{tikzcd}
\end{center}
\caption{Definition of  $<^{B_i}_\ell$  when $n_i < \ell$.}\label{FigureDefRelationsBouquet3}
\end{figure}

      Then we put $B=\bigcup_i B_i$ and $<_{\ell}^B = \bigcup_i <_{\ell}^{B_i}$ for each $\ell<\omega$ (in particular one can see that $R_\ell = \bigcup_i R_{\ell}^{B_i}$). Finally, we define a valuation on $B$ by 
\[v(p) = \{r : p\in \Gamma\} \cup\bigcup_i v_i(p).\]
It remains to verify that this model is as desired. In the coming proofs, if $x<_\ell^B y$ for some $\ell$, it is convenient to keep track of the stage where $(x,y)$ was added. At each stage we can split $<_\ell^{B_i} = <_\ell^{B_{i-1}} \cup <_\ell^{T_i} \cup\  Q_\ell^i$, where
\begin{align*}\tag*{($\dagger$)}\label{EqQij}
Q^{i}_\ell =
\begin{cases}
        \Big( R_{\ell}^{B_{i-1}} \times R_{\ell-1}^{T_i}\setminus R_\ell^{T_i} \Big)\, \cup\,\Big( R_\ell^{T_i} \times R_{\ell-1}^{B_{i-1}}\setminus R_\ell^{B_{i-1}} \Big),& \text{ if } \ell < n_i;\\
  R_{n_i}^{B_{i-1}} \times R_{n_i-1}^{T_i} ,& \text{ if } \ell = n_i;\\
        \emptyset ,& \text{ if } \ell > n_i;\\
\end{cases}
\end{align*}
It is useful to see that if $(x,y)\in <_\ell^{B}$ for some $\ell$, then there is a unique $i$ such that either $(x,y)\in <_\ell^{T_i}$ or $(x,y)\in Q_\ell^i$.
      
      \begin{lemma}\label{ClaimBIsaJbouquet}
        $(B, \undertilde <)$ is a $\mathsf{J}$-bouquet, moreover for every $i$, $B_i$ is a $\mathsf{J}$-tree.
      \end{lemma}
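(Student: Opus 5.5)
The plan is to prove by induction on $i$ that each $B_i$ is a finite $\mathsf{J}$-tree with hereditary root $r$. We then pass to the union $B$ and check the extra bouquet conditions: countable branching, finite branching at successor-rank points, and converse well-foundedness.

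\textbf{Base and inductive set-up.} $B_{-1}=\set{r}$ is trivially a $\mathsf{J}$-tree. For the inductive step, assume $B_{i-1}$ is a $\mathsf{J}$-tree and $T_i$ is a finite $\mathsf{J}_{m_i}$-tree. First I will compute the planes of $B_i$:
\begin{itemize}
\item for $\ell\le n_i$, $R_\ell^{B_i}=R_\ell^{B_{i-1}}$;
\item for $\ell<n_i$, $R_{\ell-1}^{B_i}=R_{\ell-1}^{B_{i-1}}\cup R_{\ell-1}^{T_i}$;
\item in particular $R_{n_i-1}^{B_i}=R_{n_i-1}^{B_{i-1}}\cup R_{n_i-1}^{T_i}$, since the pairs in $Q^i_{n_i}$ glue all of $R_{n_i-1}^{T_i}$ into one $n_i$-plane lying $<_{n_i}$-above $R_{n_i}^{B_{i-1}}$.
\end{itemize}
With this description, $B_i/E_{\ell+1}$ is obtained from $B_{i-1}/E_{\ell+1}$ and $T_i/E_{\ell+1}$ as follows:
\begin{itemize}
\item for $\ell=n_i$, by hanging the root plane of $T_i$ as a new child of the root plane;
\item for $\ell<n_i$, by identifying the two root planes;
\item for $\ell>n_i$, by taking a disjoint union of forests, the root plane of $T_i$ being an $(\ell+1)$-plane distinct from that of $B_{i-1}$.
\end{itemize}
In each case the quotient is again a tree under $<_\ell$. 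Converse well-foundedness and finiteness are immediate since $B_i$ is finite and no cycles are created: every new pair in $Q^i_\ell$ goes from a root plane to a strictly higher plane.

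\textbf{Frame conditions.} Condition \eqref{eq:j-tree-3} is transitivity-type closure. For $x<_m y<_n z$ with $m\le n$, I split by the stages at which the two pairs were added, using the unique-stage decomposition \ref{EqQij}. If both pairs lie inside $B_{i-1}$ or inside $T_i$, induction or the $\mathsf{J}$-tree property of $T_i$ applies. If one of them is in $Q^i$, the point is that $Q^i_m$ relates a whole root plane to a whole union of planes. Since $z$ lies in an $(m+1)$-plane of the correct target set (the target sets $R_{m-1}\setminus R_m$ are unions of $m$-planes, and $<_n$ for $n\ge m$ stays within or goes above them appropriately), the pair $(x,z)$ is in $Q^i_m$ as well.

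Condition \eqref{eq:j-tree-2} is checked the same way. For $x<_n y$ and $m<n$, I need ${<_m}(x)={<_m}(y)$. If $(x,y)\in Q^i_n$, then $x\in R_n^{B_{i-1}}$ (or $R_n^{T_i}$) and $y$ lies in $R_{n-1}$ of the other component. Hence both $x$ and $y$ lie in $R_m$ for all $m<n$, and their $<_m$-successor sets are exactly the sets prescribed by \ref{eq:rel-closure}: the union of the two "$R_{m-1}\setminus R_m$" parts together with old successors. This requires checking that inside $B_{i-1}$ and $T_i$ every point of $R_m$ has the same $<_m$-successors, which follows from \eqref{eq:j-tree-2} in each component.

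\textbf{Main obstacle and the limit.} The main obstacle is the bookkeeping for \eqref{eq:j-tree-2} and \eqref{eq:j-tree-3} when the pairs come from different $Q$-parts. The cleanest way is to prove first an auxiliary invariant:
\begin{quote}
for each $\ell$, $x<_\ell^{B_i}y$ with $x\in R_\ell^{B_i}$ iff $y\in R_{\ell-1}^{B_i}\setminus R_\ell^{B_i}$, or $(x,y)$ comes from a component below its root plane.
\end{quote}
With this invariant the cases reduce to plane computations.

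For $B=\bigcup_i B_i$, every finite configuration witnessing \eqref{eq:j-tree-2}, \eqref{eq:j-tree-3} or the tree property lives in some $B_i$. Each relation $<_\ell^B$ is converse well-founded because any point lies in some $T_i$ or is $r$. A point of $T_i$ has only finitely many $<_\ell$-successors outside $R_\ell$. Points of $R_\ell^B$ have as $<_\ell$-successors a countable union of planes of bounded rank coming from each $T_j$, hence ranks $\le\omega$, with the roots having countably many immediate successors. Points of successor rank lie outside the root plane and therefore have finitely many immediate successors. Smallness also follows, since infinite branching occurs only at $R_\ell^B$.
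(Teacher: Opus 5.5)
Your route is the paper's. You split $<^{B_i}_\ell$ into $<^{B_{i-1}}_\ell\cup<^{T_i}_\ell\cup Q^i_\ell$, check \eqref{eq:j-tree-2} and \eqref{eq:j-tree-3} by cases on the stage at which a pair was added, and pass to $B$ because later stages never add pairs between points of an earlier $B_j$. Your explicit computation of the planes and of the quotients $B_i/E_{\ell+1}$ is a real addition, since the paper never verifies the tree condition.

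\textbf{The false step.} Your final paragraph claims that points of successor rank lie outside the root plane. This need \emph{not} hold. Take $\Gamma\ni\tup{0}\top,[0][0]\bot$ (e.g.\ the theory of the root of a two-point $<_0$-chain), and let $j_0$ satisfy $n_{j_0}=0$, $\phi_{j_0}=[0]\bot$. For $i>j_0$, $\Gamma(i)$ contains $[0][0]\bot$ (the clause $[m]\phi_{j_0}$ with $m=0$), and also $[0]\bot$ when $n_i=0$. Hence every $<_0$-successor of $r$ lying in such a $T_i$ has $<_0$-rank $0$. Since $(n_i,\psi_i)=(0,\top)$ for infinitely many $i$, the roots $r^{T_i}$ of these stages are infinitely many immediate $<_0$-successors of $r$. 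Yet $r$ itself has finite, hence successor, $<_0$-rank.

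So the clause of Definition~\ref{def:bouquet} demanding finite branching at successor-rank points genuinely fails for this $B$, and no argument can establish it. The lemma holds only under the convention in the remark after Definition~\ref{def:bouquet}: the extra neighbourhoods are attached to every infinitely branching point, whatever its rank. Accordingly, the paper's proof verifies only converse well-foundedness and \eqref{eq:j-tree-2}, \eqref{eq:j-tree-3}. You should drop that sentence and state explicitly that the rank clause is neither satisfied nor needed.

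\textbf{Smaller repairs.}

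\emph{Plane indices.} Your first bullet has the inequality reversed. $R^{B_i}_\ell=R^{B_{i-1}}_\ell$ holds for $\ell\ge n_i$, while $R^{B_i}_\ell=R^{B_{i-1}}_\ell\cup R^{T_i}_\ell$ for all $\ell<n_i$, including $\ell=n_i-1$. As written, for $n_i\ge 2$ your first two bullets give two different values of $R^{B_i}_0$. Your later description of the quotients does use the correct version.

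\emph{Well-foundedness.} Converse well-foundedness of $<^B_\ell$ does not follow from ``every point lies in some $T_i$ or is $r$''. The reason, as in the paper, is that every pair of every $Q^{i}_\ell$ starts in a root plane. So a point of $T_n\setminus R^B_\ell$ has all its $<^B_\ell$-successors in $T_n$, and an ascending chain is trapped in the finite set $T_n$ after its first step.

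\emph{Rootedness.} When you merge root planes, you need each point of $R^{T_i}_m$ to have exactly $R^{T_i}_{m-1}\setminus R^{T_i}_m$ as its $<_m$-successors. This uses that $T_i$ is rooted (every $y\neq r^{T_i}$ satisfies $r^{T_i}<_k y$ for some $k$), not just \eqref{eq:j-tree-2}. Without it, the new pairs in $Q^i_m$ would break \eqref{eq:j-tree-2}.
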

      \begin{proof}
        We first show that $<_\ell^B$ is wellfounded for each $\ell$. Assume for some $\ell$ there is an infinite $<_\ell^B$-increasing sequence of points $x_0 <_\ell^B \dots <_\ell^B x_k <_\ell^B\dots$. We have $x_1\in T_n$ for some $n$, since $x_0 <_\ell x_1$ we have $x_1\in B_m \setminus R_\ell^{B_m}$ for large enough $m$, therefore it can only be the case that $x_k\in T_n$ for all $k>0$. Since $T_n$ is finite, there is no such sequence.

        Now that we showed well-foundedness, in the rest of the proof we focus on showing that each $B_i$ and $B$ are $\mathsf{J}$-trees by verifying properties \eqref{eq:j-tree-2}, \eqref{eq:j-tree-3} from Definition~\ref{def:j-tree}. Fix $j<\omega$.

        To verify property~\eqref{eq:j-tree-3}  we assume that $x,y,z\in B_j$ are such that $x <_\ell^B y <^B_k z$ where $\ell \le k$, and show that $x<_\ell z$. Let $i<j$ be such that $(x,y)\in {<_\ell^{T_i}}$ or $(x,y)\in Q_\ell^i$.
        \begin{itemize}
          \item if $(x,y)\in <^{T_i}_\ell$ then $y\notin R_k^{B_{i'}}$ for any $i'\ge i$, it follows that $x <_\ell^{T_i} y <_k^{T_i} z $ and we are done, since $T_i\models \mathsf{J}$.
            \item otherwise, if $n_i=\ell$, it must be $x\in R_\ell^{B_{i-1}}$ and $y\in R_{\ell-1}^{T_i}$ it follows that $y<_k^{T_i} z$, hence $z\in R_{\ell-1}^{T_i}$ and so $x<_\ell^{B_i} z$.
            \item if $\ell< n_i$ we have $y\in R_{\ell-1}^{B_{i-1}}\setminus R_{\ell}^{B_{i-1}}$ implies $z\in R_{\ell-1}^{B_{i'-1}}\setminus R_{\ell}^{B_{i'-1}}$ for some $i'\ge i$ as well as $y\in R_{\ell-1}^{T_{i-1}}\setminus R_{\ell}^{T_{i-1}}$ implies $z\in R_{\ell-1}^{T_{i-1}}\setminus R_{\ell}^{T_{i-1}}$. Both cases imply $x <_\ell^{B} z$.
          \end{itemize}
          This means that $B_j$ satisfies \eqref{eq:j-tree-3}. Since for each $x,y,z\in B$, there is $j$ such that $x,y,z\in B_j$, it follows that $B$ satisfies \eqref{eq:j-tree-3} as well.

          \medskip
        
          To verify property~\eqref{eq:j-tree-2}  assume that $x,y\in B_j$ are such that $x<_\ell y$ from some $\ell$. We need to show that whenever $z\in B_j$ and $k<\ell$, then $x<_k z$ if and only if $y<_k z$.
          Let $i$ be such that $(x,y)\in <_\ell^{T_i}\cup Q_\ell^i$, recall definition~\ref{EqQij} of $Q_\ell^i$. Note also that since $x<_\ell y$ and $\ell>k$, it follows that $x E_{k+1} y$ (see the paragraph after Definition~\ref{def:j-tree}), in particular $x \in R_k^S$ if and only if $y\in R_k^S$, where $S$ is some $\mathsf{J}$-tree.
          \\   {\large Case I.} $x<_\ell^{T_i} y$. Then $x,y\in T_i$ then either $z\in T_i$ and we are done since $T_i\models \mathsf{J}$, or else:
           \begin{itemize}
             \item if $z\in B_{i-1}$, then we have $x<^{B_i}_k z$ if and only if $y<^{B_i}_k z$ if and only $z \in R_{k-1}^{B_{i-1}}\setminus R_{k}^{B_{i-1}}$ and $x,y \in R^{T_{i}}_k$; 
             \item if $z \in B_{i'}\setminus B_{i'-1} = T_{i'}$ for some $i'>i$, then $x <_k^{B_{i'}} z$ if and only if $y<_k^{B_{i'}}z$ if and only if $z \in R_{k-1}^{T_{i'}}\setminus R_{k}^{T_{i'}}$ and $x,y \in R^{B_{i}}_\ell$;
            \end{itemize} \noindent 
            {\large Case II.} $(x,y)\in Q_\ell^i$. In this case, $x\in B_{i-1}$ if and only if $y\in T_i$ and $x,y\in R_k^{B_i}$.
            \begin{itemize}
                \item if $z \in B_{i-1}$ we have $x<_\ell^{B_i} z$ if and only if $y<_\ell^{B_i} z$ if and only if $z\in R_{\ell-1}^{B_{i-1}}\setminus R_{\ell}^{B_{i-1}}$;
                \item if $z \in T_{i'}$ for some $i'\ge i$, we have $x<_\ell^{B_{i'}} z$ if and only if $y<_\ell^{B_{i'}} z$ if and only if $z\in R_{k-1}^{T_{i'}}\setminus R_{k}^{T_{i'}}$.
        \end{itemize}
        Similarly for any $x,y,z\in B$, there is $j$ such that $x,y,z\in B_j$, so the fact that $B_j$ satisfies \eqref{eq:j-tree-2} implies that so does $B$. 
      \end{proof}
        
\begin{claim}\label{ClaimBIsaJbouquetSmall}
$B$ is a small $\mathsf{J}$-bouquet.
\end{claim}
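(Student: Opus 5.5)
The claim has two parts. First, $B$ is a $\mathsf{J}$-bouquet: the countability and finite-branching requirements on each truncation $(B,<_0,\dots,<_{m-1})$ must hold. Second, smallness: for each $i$, the only infinitely branching $(i+1)$-plane is $R^B_i$. Lemma~\ref{ClaimBIsaJbouquet} already gives converse well-foundedness and properties \eqref{eq:j-tree-2}, \eqref{eq:j-tree-3}, so what remains is counting immediate successors. I will do this via the decomposition $<_\ell^B=\bigcup_i(<_\ell^{T_i}\cup Q^i_\ell)$ from \ref{EqQij}.

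\emph{Step 1: points outside root planes.} Take $x\in T_i$ with $x\notin R^{B}_\ell$. Then $x\notin R^{T_i}_\ell$, and by \ref{EqQij} no later stage adds a pair $(x,y)\in Q^{i'}_\ell$. The first coordinate of any $Q$-pair lies in $R^{B_{i'-1}}_\ell$ or $R^{T_{i'}}_\ell$, and membership in the $\ell$-root plane is stable under the later stages: a point outside $R^{B_{i'-1}}_\ell$ stays outside $R^{B_{i'}}_\ell$, since adding $T_{i'}$ only adds arrows out of root points. Hence the $<_\ell$-successors of $x$ are exactly its successors in the finite tree $T_i$. So every $\ell$-plane other than $R^B_{\ell-1}$ is finitely branching with respect to $<_\ell$. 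Reindexing with $i=\ell$, this is the smallness condition that any infinitely branching $(i+1)$-plane must be $R^B_i$. The planes here are the equivalence classes of $E_{i+1}$, so I will check that ``branching'' is being measured by $<_i$-successors of points of the plane; that matches Lemma~\ref{LemmaJBouquetChar}.

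\emph{Step 2: points inside root planes, and countability.} $B$ is a countable union of finite sets, so every point has at most countably many successors. For $x\in R^B_\ell$, the set of successors may be infinite, since each stage adds $R^{T_i}_{\ell-1}\setminus R^{T_i}_\ell$ or $R^{T_i}_{n_i-1}$ above it. In that case the bouquet topology of Definition~\ref{def:bouquet}(2) applies, and the definition permits infinitely many immediate successors at such points.

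\emph{Step 3: the successor-rank condition.} The definition requires that every point of successor $<_\ell$-rank have only finitely many immediate $<_\ell$-successors. This is the step I expect to be the main obstacle. For points outside $R^B_\ell$, Step 1 already gives finitely many successors. For $x\in R^B_\ell$, I will argue that if $x$ gains a $<_\ell$-successor at infinitely many stages, then its rank is a limit. This needs care in two cases.

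The first case is $\ell=n_i$ for infinitely many $i$. There the enumeration repeats each $\psi_i$ infinitely often, and the new successors $r^{T_i}$ have unbounded rank. Without that, they may not; failing that, I would fall back on the paper's remark that the version with neighbourhoods added at all infinitely branching points only entails technical differences, and read the condition that way.

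The second case is $\ell<n_i$. There the successors come from $R^{T_i}_{\ell-1}\setminus R^{T_i}_\ell$. I will need to check that in this case $x$ and the new points stay in a common plane structure compatible with the limit neighbourhoods.

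Combining the three steps gives that $B$ is a small $\mathsf{J}$-bouquet.
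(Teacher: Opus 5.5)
Your Step 1 is the paper's smallness argument. A point of $T_i$ outside $R^B_\ell$ is never the first coordinate of a pair in any $Q^{i'}_\ell$, so its $<_\ell$-successors are those in the finite tree $T_i$. The paper says ``in $T_{i+1}$ or $B_i$'', which is coarser but equally finite.

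There are two slips in Step 1. First, $x\notin R^B_\ell$ yields $x\notin R^{T_i}_\ell$ only when $\ell<n_i$; for $\ell\ge n_i$ the root $r^{T_i}$ lies in $R^{T_i}_\ell\setminus R^B_\ell$. You do not need it in that case, because $Q^i_\ell$ then has no first coordinates in $T_i$. Second, what you have shown is that every $(\ell+1)$-plane other than $R^B_\ell$ is finitely $<_\ell$-branching, not the statement about $\ell$-planes you wrote.

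For the bouquet part, the paper merely cites Lemma~\ref{ClaimBIsaJbouquet}, which checks only well-foundedness and the $\mathsf{J}$-frame conditions.

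The genuine gap is Step 3. Your main line there is that the new successors of a root-plane point have unbounded rank, so its rank is a limit. This is false, and in fact the successor-rank clause of Definition~\ref{def:bouquet} can fail for $B$.

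Suppose $\tup{0}\top,[0][0]\bot\in\Gamma$, and fix $j$ with $n_j=0$ and $\phi_j=[0]\bot$.
\begin{itemize}
\item For $i>j$ with $n_i=0$, the member $\phi_j\wedge[0]\phi_j$ of $\Gamma'(i)$ gives $T_i,r^{T_i}\Vdash[0]\bot$.
\item For $i>j$ with $n_i>0$, $\Gamma'(i)$ contains $[0][0]\bot\wedge\tup{0}\psi_j$.
\end{itemize}
Propagating along condition~\eqref{eq:j-tree-2}, in either case every point that stage $i$ places $<_0$-above $r$ has $<_0$-rank $0$ in $T_i$. By your Step 1 it has the same rank in $B$.

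So $r$ has finite, hence successor, $<_0$-rank. Yet it has infinitely many immediate $<_0$-successors $r^{T_i}$, one for each of the infinitely many $i$ with $n_i=0$. Repeating the $\psi_i$ makes the successors numerous, not tall.

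Under the literal wording of the definition, the claim therefore fails for such $\Gamma$. Your fallback is thus forced, not optional. Definition~\ref{def:bouquet} must be read as allowing infinitely many immediate successors at any point, with the cofinite neighbourhoods of its clause (2) placed there. The remark after the definition indicates this reading, and Lemma~\ref{LemmaJBouquetChar} uses it. With that reading, Steps 1 and 2 together with Lemma~\ref{ClaimBIsaJbouquet} are the whole proof, which is what the paper tacitly does. Drop the rank argument and the unspecified ``second case''.
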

\proof
By Claim \ref{ClaimBIsaJbouquet}, $B$ is a $\mathsf{J}$-bouquet. We are to show that for each $x\notin R^B_\ell$, $x$ has finitely many $<_\ell$ successors. Now, fix $\ell\in\mathbb{N}$ and let $x\not\in R_\ell^B$, say, $x \in T_{i+1}\setminus T_i$. Recalling the definition of $<^B_\ell$, points $y$ such that $x <^B_\ell y$ either  belong to $T_{i+1}$ or to $B_i$ (this uses that $x \not \in R_\ell^B$). Since both $T_{i+1}$ and $B_i$ are finite, the claim follows.
\endproof

\subsection{Validating $\Gamma$}\label{SubsectionTruth}

Before proceeding to show that $B$ satisfies $\Gamma$, we need the following claim. Recall that $N^\bot=\set{n<\omega: [n]\bot\in \Gamma}$.

\begin{claim}\label{claim:empty-root}
  For each $\ell \in N^\bot$, for each $x\in B$, $r\not<_\ell x$.
\end{claim}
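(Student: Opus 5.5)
The plan is to check directly from the construction \ref{eq:rel-closure} that no pair $(r,x)$ is ever put into $<^B_\ell$ when $\ell\in N^\bot$. Since $r$ lies in no $T_i$, it never occurs in any relation $<^{T_i}_\ell$. So by the decomposition $<^{B_i}_\ell = <^{B_{i-1}}_\ell\cup<^{T_i}_\ell\cup Q^i_\ell$, any pair $(r,x)\in <^B_\ell$ must belong to $Q^i_\ell$ for some $i$, with $r$ as first coordinate. We fix $\ell\in N^\bot$ and $i<\omega$, and go through the three cases of \ref{EqQij}, using the fact that $n_i\notin N^\bot$.

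That fact comes from the enumeration. The index $n_i$ satisfies $\tup{n_i}\psi_i\in\Gamma$, and then $\tup{n_i}\top\in\Gamma$ by maximality, so $n_i\in N^\top$. Since $N^\bot\sqcup N^\top=\omega$, we get $\ell\neq n_i$, which disposes of the case $\ell=n_i$. If $\ell>n_i$, then $Q^i_\ell=\emptyset$ and there is nothing to check. In the case $\ell<n_i$, the second component $R^{T_i}_\ell\times(R^{B_{i-1}}_{\ell-1}\setminus R^{B_{i-1}}_\ell)$ has first coordinates in $T_i$, so $r$ does not occur there. It therefore remains to show that the first component, $R^{B_{i-1}}_\ell\times(R^{T_i}_{\ell-1}\setminus R^{T_i}_\ell)$, is empty, i.e.\ that $R^{T_i}_{\ell-1}\subseteq R^{T_i}_\ell$.

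This last step is the main point. Since $\ell<n_i\le m_i$ by \ref{eq:m_i}, the formula $[\ell]\bot$ belongs to $\Gamma(i)$ by \ref{eq:Gamma-i}. Hence $T_i,r^{T_i}\Vdash[\ell]\bot$, which means $r^{T_i}$ has no $<^{T_i}_\ell$-successor. Every point of the plane $R^{T_i}_\ell$ is $E_{\ell+1}$-connected to $r^{T_i}$ through the relations $<_k$ with $k>\ell$. By property \eqref{eq:j-tree-2} of Definition~\ref{def:j-tree}, such points have exactly the same $<_\ell$-successors as $r^{T_i}$. So no point of $R^{T_i}_\ell$ has a $<_\ell$-successor. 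In the tree $T_i/E_{\ell+1}$ restricted to the $\ell$-plane $R^{T_i}_{\ell-1}$, every $(\ell+1)$-plane other than the root plane $R^{T_i}_\ell$ lies $<_\ell$-above $R^{T_i}_\ell$. Hence $R^{T_i}_{\ell-1}\setminus R^{T_i}_\ell=\emptyset$. (For $\ell=0$, $R^{T_i}_{-1}=T_i$ and the same argument applies.) The one point I expect to need care is this description of $R^{T_i}_{\ell-1}\setminus R^{T_i}_\ell$ as the set of points $<_\ell$-above the root $(\ell+1)$-plane; I would verify it from the tree structure of $T_i/E_{\ell+1}$ and property \eqref{eq:j-tree-3}.

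Putting the cases together, $r$ is never the first coordinate of a pair in any $Q^i_\ell$, so $r\not<^B_\ell x$ for every $x\in B$.
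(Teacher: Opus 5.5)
Your proposal is correct and takes essentially the same approach as the paper. In both, the only way a pair $(r,x)$ can enter $<^B_\ell$ is through the first component of $Q^i_\ell$ with $\ell<n_i$. A point of $R^{T_i}_{\ell-1}\setminus R^{T_i}_\ell$ would then give $r^{T_i}$ an $\ell$-successor, contradicting $[\ell]\bot\in\Gamma(i)$.

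You are slightly more explicit than the paper in two places. You rule out the case $\ell=n_i$ via $n_i\in N^\top$. You also justify, using property \eqref{eq:j-tree-2} and the tree structure of the root $\ell$-plane, the step that the paper states simply as ``which means that $r^{T_i}<_\ell x$''.
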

\begin{proof}
  In fact, we need to show that for each $i<\omega$, for each $i<\omega$, $r\not <_\ell^{B_i} x$ for any $x\in B_i$. By construction and the induction hypothesis, this could fail in the following case: $x\in R_{\ell-1}^{T_i}\setminus R_\ell^{T_i}$ and $\ell<n_i$, which means that $r^{T_i}<_{\ell} x$. It follows that $T_i,r^{T_i}\Vdash \tup{\ell} \top$, but $\ell<n_i$ and so $[\ell]\bot\in \Gamma(i)$ (see \ref{eq:Gamma-i}), a contradiction. 
\end{proof}

By Claim \ref{ClaimBIsaJbouquetSmall}, $(B,\undertilde <)$ is a small $\mathsf{J}$-bouquet. We now prove the lemma with which the proof of the theorem will be complete.
\begin{lemma}[Truth Lemma]
  \[
    \label{eqJStrongCompInduction}
    \forall \phi\, (\phi \in \Gamma\leftrightarrow B, r^B \Vdash \phi),
  \]
\end{lemma}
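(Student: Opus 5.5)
We argue by induction on the complexity of $\phi$ in negation normal form. Since $\Gamma$ is maximal consistent, it suffices to prove the forward direction $\phi\in\Gamma\Rightarrow B,r^B\Vdash\phi$ for all $\phi$ in negation normal form: if $\phi\notin\Gamma$, then the negation normal form of $\neg\phi$ lies in $\Gamma$, so $r^B\Vdash\neg\phi$. The propositional cases are immediate from the choice $v(p)\ni r$ iff $p\in\Gamma$ and from the closure of $\Gamma$ under conjunction and disjunction (a disjunction in $\Gamma$ has a disjunct in $\Gamma$). The modal cases rely on Lemma~\ref{LemmaJBouquetChar}, which applies because $B$ is small by Claim~\ref{ClaimBIsaJbouquetSmall}. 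The root $r$ lies in $R^B_\ell$ for every $\ell$, and so the clause describing $\tup{\ell}$ at $r$ depends on whether $r$ has infinitely many immediate $<_\ell$-successors. We also need a preservation fact: for each $i$ and each $y\in T_i$, the truth of subformulas of $\Gamma(i)$ at $y$ is the same in $B$ as in $T_i$. This holds for the points $y\in T_i\setminus R^{T_i}_{m_i}$ checked against modalities up to $m_i$, and we prove it by a side induction using the shape of $Q^{i'}_\ell$ from \ref{EqQij}.

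\emph{Diamond case.} Suppose $\tup{n}\psi\in\Gamma$. Then $n\in N^\top$, and $\psi=\psi_i$ with $n_i=n$ for infinitely many $i$. For each such $i$ we have $T_i,r^{T_i}\Vdash\psi_i$, and by \ref{eq:rel-closure} $r<_{n}r^{T_i}$ (because $r\in R^{B_{i-1}}_{n_i}$ and $r^{T_i}\in R^{T_i}_{n_i-1}$). Once the preservation fact gives $B,r^{T_i}\Vdash\psi$, we obtain infinitely many immediate $<_n$-successors of $r$ satisfying $\psi$, and in either clause of Lemma~\ref{LemmaJBouquetChar} this yields $r\Vdash\tup n\psi$. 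We must also rule out that immediate successors of $r$ are not the roots themselves; for this we use that $R^{T_i}_{n-1}$ points lie in the same $n$-plane as $r^{T_i}$ and carry the same boxed information.

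\emph{Box case (main obstacle).} Suppose $[n]\phi\in\Gamma$. If $n\in N^\bot$, Claim~\ref{claim:empty-root} shows that $r$ has no $<_n$-successors, so $[n]\phi$ holds vacuously. Otherwise $\phi=\phi_j$ with $n_j=n$, and we must show that all but finitely many immediate $<_n$-successors of $r$ force $\phi$; only finitely many may fail, since $B$ uses cofinite neighbourhoods. Every $<_n$-successor $y$ of $r$ was added at some stage $i$: either $y\in T_i$ via $Q^i_n$ with $n_i=n$, or via $Q^i_n$ with $n<n_i$, in which case $y\in R^{T_i}_{n-1}\setminus R^{T_i}_n$. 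For $i>j$ we use the conjuncts of $\Gamma(i)$: $\phi_j\wedge[n_j]\phi_j$ when $n_i=n$, and $[n_j]\phi_j$ together with $[m]\phi_j\wedge[n_j][m]\phi_j$ for $n_j\le m\le m_i$ when $n_j<n_i$. These guarantee that every point of $T_i$ that becomes $<_n$-above $r$ satisfies $\phi_j$ (and even $[n]\phi_j$) inside $T_i$, and the preservation fact transfers this to $B$. The finitely many stages $i\le j$ contribute only finitely many immediate successors, which is exactly the exception that the cofinite neighbourhoods permit. The hardest part will be the transfer, namely showing that the extra $Q$-arrows added at later stages do not break truth in $T_i$. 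The plan is to show that any such arrow from $T_i$ leads into a later $T_{i'}$ or into $B_{i-1}$ only at levels above $m_i$, or at levels $\ell<n_{i'}$ with $[\ell]\bot$-style constraints; this is what the $m_i$ bookkeeping in \ref{eq:Gamma-i} is designed to control.
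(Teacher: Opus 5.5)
Your overall architecture matches the paper's: diamonds come from infinitely many stages $i$ with $\psi_i=\psi$, $n_i=n$, boxes from cofinitely many stages, and everything passes through a transfer from $T_i$ to $B$. The gap is that your transfer fact is false as stated, and the mechanism you propose for it cannot work.

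A point $y\in R^{T_i}_\ell$ with $\ell<n_i$ lies in $R^B_\ell$, the same $(\ell+1)$-plane as $r$. So in $B$ it has exactly the $<_\ell$-successors of $r$: $Q^i_\ell$ sends it into $R^{B_{i-1}}_{\ell-1}\setminus R^{B_{i-1}}_\ell$, and every later stage $i'$ with $n_{i'}\ge\ell$ sends it into $T_{i'}$. These arrows are at levels $\ell<n_i\le m_i$, not above $m_i$. The constraints $[\ell]\bot$ control them only when $\ell\in N^\bot$. Hence the truth of $\tup{\ell}\chi$ at such $y$ in $B$ is the global truth at $r$, which $T_i$ does not see unless $\Gamma(i)$ already contains $\Gamma$'s verdict on that formula.

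Here is a concrete counterexample. Take $n_0=1$, $\psi_0=q\vee\tup{0}q$, and $[0]\neg q,\tup{0}\neg q\in\Gamma$. This is consistent with $\mathsf J$: use the tree $r<_1 a<_0 b$, $r<_0 b$, with $q$ true only at $a$. Then $T_0$ may verify $\psi_0$ at its root only through a $<_0$-successor satisfying $q$. In $B$, however, $r<_1 r^{T_0}$, so $r^{T_0}$ has the same $<_0$-successors as $r$. If $B,r\Vdash[0]\neg q$, as the lemma demands, then $B,r^{T_0}\not\Vdash\psi_0$.

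So no transfer holds ``for each $i$'' for subformulas of $\Gamma(i)$. Restricting to $T_i\setminus R^{T_i}_{m_i}$ does not help either. The bad points are $r^{T_i}$ itself, which your diamond case needs, and more generally all of $R^{T_i}_\ell$ for $\ell<n_i$.

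The missing idea is that the transfer must depend on the formula being treated. The paper fixes $\phi$ and chooses $i^*$ so large that every $[\ell]\theta$ and $\tup{\ell}\theta$ in $\Gamma$, with $\theta$ a subformula of $\phi$ or $\neg\phi$, has already been enumerated, i.e. is $[n_m]\phi_m$ or $\tup{n_m}\psi_m$ for some $m<i^*$. Preservation is then proved only for $i>i^*$ and subformulas of $\phi$, by induction on subformulas.

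Off the root planes, no new $<_\ell$-arrows leave $T_i$, so that case is local. At a point $y\in R^B_\ell$ you do not argue from the shape of $Q$ at all. Instead you compare both sides with $\Gamma$, using maximality:
\begin{itemize}
\item If $[\ell]\chi\in\Gamma$, the corresponding conjuncts of $\Gamma(i)$ give $T_i,y\Vdash[\ell]\chi$. The same conjuncts in all later $\Gamma(j)$, together with the cofinite neighbourhoods at points of $R^B_\ell$, give $B,y\Vdash[\ell]\chi$.
\item If $\tup{\ell}\neg\chi\in\Gamma$, it lies in $\Gamma(i)$ and recurs in infinitely many later stages. Their witnesses in $R^{T_j}_{\ell-1}$ are successors of $y$ in $B$.
\end{itemize}
With this in place, your diamond and box arguments go through after discarding the finitely many stages $i\le i^*$. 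In the box case this means using $i>\max(j,i^*)$, not just $i>j$.
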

\begin{proof}
The proof is by induction.
If $\phi$ is a propositional variable or a negation of a propositional variable, then this follows immediately from the definition of the valuation. The Boolean case is immediate. Thus, it suffices to prove by induction on $\phi$ that for all $k\in\mathbb{N}$, we have
\begin{equation}\label{eqJStrongCompInductionBox}
[k] \phi \in \Gamma\leftrightarrow B, r^B \Vdash [k] \phi.
\end{equation}

First, if $k\in N^\bot$, then by Claim~\ref{claim:empty-root} $B,r^B\Vdash [k]\bot$ and $[k]\bot\in \Gamma$.
Thus, for the rest of the proof we assume that $k\in N^\top$, so that $\langle k\rangle \top \in \Gamma$.
We shall use the characterization given by Lemma \ref{LemmaJBouquetChar} and \ref{eq:rel-closure}.

Let $i^*$ be large enough so that for all $l \leq k$ and all subformul\ae\  $\theta$ of $\phi$ and  $\neg\phi$\footnote{Recall that we assume all formul\ae\ are in negation normal form.}, if $[l]\theta \in \Gamma$, then $\theta = \phi_m$ for some $m < i^*$ and if $\tup l \theta\in\Gamma$, then $\theta = \psi_m$ for some $m< i^*$.

\begin{claim}
  For each $\psi$ with $\psi\in\sub{\phi}$, if $i >i^*$, then for each $y\in T_{i}$, we have $T_i,y\Vdash \psi$ if and only if $B,y\Vdash \psi$.
\end{claim}
\begin{proof}[Proof of the Claim]
  By induction. Propositional variables, negations of propositional variables and the Booleans are trivial. Assume now that $\psi=[\ell]\chi$. Recall \ref{eq:rel-closure} and \ref{eq:Gamma-i}.
  \begin{itemize}
    \item $y\notin R_\ell^B$, it means for all $z$ and for all $j$, $(y,z)\notin Q_\ell^j$. Then $y<_\ell^B z$ if and only if $z\in T_i$ and $y<_\ell^{T_i}z$, then by the induction hypothesis for $\chi$, we have $T_i,y\Vdash[\ell]\chi$ if and only if $B,y\Vdash [\ell]\chi$;
  \item $y\in R_\ell^B$. Note that $y\in R_\ell^B$ and $y\in T_i$ implies $n_i<\ell$. If $\ell\in N^\bot$, then $T_i,y\Vdash [\ell]\bot$ and $B,y\Vdash [\ell]\bot$. If $\ell\in N^\top$, then $y$ has infinitely many $\ell$-successors in $B$;
  \begin{itemize}
    \item if $[\ell]\chi\in\Gamma$, then $[\ell]\chi,[\ell][m]\chi, [m]\chi\in \Gamma(j)$ for each $j>i^*$  and $\ell<m\le m_i$. It follows that $T_i,y\Vdash [\ell]\chi$. Furthermore, for each $j>i$, $R_{\ell-1}^{T_{j}}\subset v(\chi)$, it follows that for all but finitely many $z\in B$, $y<_\ell^B z$ implies $B,z\Vdash\chi$ and so $B,y\Vdash [\ell]\chi$ as well;
    \item if $\tup{\ell}\neg\chi\in\Gamma$, then for each $j \ge i^*$ with $n_{j}>\ell$, $\tup\ell\neg\chi\in\Gamma(j)$ and $T_j,r^{T_j}\Vdash \Gamma(j)$ implies that there is $z_j\in R_{\ell-1}^{T_j}$ with $T_j,z_j\Vdash\neg\chi$. Recall that $y<_\ell ^B z$ for $z\in T_{i'}$ if and only if $n_{i'}\ge \ell$ and $z\in R^{T_i}_{\ell-1}\setminus R^{T_i}_\ell$. It follows that $B,y\Vdash\tup\ell\neg\chi$ witnessed by the set $\set{z_j: j>i\text{ and }n_j>\ell}$, which is infinite. At the same time $n_i<\ell$ and $i>i^*$ so $y\in R_\ell^{T_i}$ implies $T_i,y\Vdash \tup\ell\neg\chi$.
  \end{itemize}
\end{itemize}
\end{proof}
Now \eqref{eqJStrongCompInductionBox} follows easily. If $[k]\psi\in\Gamma$, then $\psi,[k]\psi\in \Gamma(i)$ for all $i>i^*$. Then $T_i,r^{T_i}\Vdash [k]\psi\land\psi$ for each $i>i^*$ and so $B,r^{T_i}\Vdash[\ell]\psi\land\psi$, it follows that $B,r\Vdash [\ell]\psi$. The same argument for $\tup k\psi$ completes the other direction.

We showed \eqref{eqJStrongCompInductionBox}, which completes the proof of the lemma.\end{proof}

This finishes the proof of Theorem~\ref{thm:bouquet-n}. Indeed, we showed that for any $\Gamma$ consistent with $\mathsf{J}$, there is a $\mathsf{J}$-bouquet with root $r$ and 
valuation $v:\vars\to P(B)$ such that $B,r\Vdash \Gamma$. Thus, we have the following strong completeness result:
\begin{theorem}
$\mathsf{J}$ is strongly complete with respect to $\mathsf{J}$-spaces, i.e., if $\Gamma$ is a set of formulae consistent with $\mathsf{J}$, then there are a  $\mathsf{J}$-space $(X, \vec \tau)$, a point $x \in X$, and a topological model based on $(X, \vec \tau)$ such that 
\[X,x\Vdash \Gamma.\]
\end{theorem}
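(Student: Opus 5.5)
This is a direct corollary of Theorem~\ref{thm:bouquet-n} and Lemma~\ref{LemmaJBouquetsAreJSpaces}; no new construction is needed. The plan is to take the bouquet model just built, view it as a polytopological space, and check that the topological satisfaction relation gives the same answers as the one used in the Truth Lemma.

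Let $\Gamma$ be consistent with $\mathsf{J}$. Apply Theorem~\ref{thm:bouquet-n} with $n=\omega$. This gives a small $\mathsf{J}$-bouquet $(B,\undertilde<)$ and a valuation $v$ with $B,r^B\Vdash_v\Gamma$. By Definition~\ref{def:bouquet}, $B$ is identified with the polytopological space $(B,\vec\tau)$, whose basic $\tau_i$-neighbourhoods are built by induction on $<_i$-rank. By Lemma~\ref{LemmaJBouquetsAreJSpaces}, $(B,\vec\tau)$ is a $\mathsf{J}$-space. We then take $X=B$, $x=r^B$, and the topological model $(B,\vec\tau,v)$.

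The one point to check is that the relation $\Vdash$ in the Truth Lemma is the topological one, $\inter{\tup i\phi}=d_{\tau_i}\inter{\phi}$, and not the plain Kripke semantics of the frame. The Truth Lemma was argued through Lemma~\ref{LemmaJBouquetChar}, which characterises the topological derivative in a small bouquet. It says: at a point with finitely many immediate $<_i$-successors, $d_{\tau_i}$ agrees with the Kripke diamond; at an infinitely branching point, it requires infinitely many successors $y$ satisfying $\phi\lor\tup i\phi$. So the satisfaction established at $r^B$ is already the topological one. Hence $X,x\Vdash\Gamma$ in the topological model, which gives the statement.

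The only potential obstacle is this bookkeeping, namely confirming that the Truth Lemma's use of Lemma~\ref{LemmaJBouquetChar} covers all the infinitely branching points that matter. By smallness (Claim~\ref{ClaimBIsaJbouquetSmall}), the only such points lie in the root planes $R_\ell^B$. These are exactly the points handled by the infinitely-many-successors case in the Truth Lemma's inner claim, so nothing further should be needed.
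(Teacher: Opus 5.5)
Your proposal is correct and follows the same route as the paper: the theorem is Theorem~\ref{thm:bouquet-n} (with $n=\omega$) combined with the fact that $\mathsf{J}$-bouquets are $\mathsf{J}$-spaces (Lemma~\ref{LemmaJBouquetsAreJSpaces}), which the paper leaves implicit in its ``Thus''. Your extra check that the forcing relation is the topological one only makes explicit the paper's convention of identifying a bouquet with its polytopological space (Definition~\ref{def:bouquet}, Lemma~\ref{LemmaJBouquetChar}).
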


As an immediate corollary, we obtain the following strong completeness theorem for $\glp$:

\begin{corollary}\label{CorollaryGLPStrongComp}
Suppose $\Gamma$ is a set of formulae consistent with $\glp$. Then there are a  $\mathsf{J}$-space $(X, \vec \tau)$, a point $x \in X$, and a topological model based on $(X, \vec \tau)$ such that
\[X, x\Vdash \mathsf{GLP} + \Gamma.\]
\end{corollary}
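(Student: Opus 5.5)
The plan is to reduce Corollary~\ref{CorollaryGLPStrongComp} to the strong completeness of $\mathsf{J}$ (Theorem~\ref{thm:bouquet-n}) by a standard "axioms as assumptions" trick. Let $\Gamma$ be $\glp$-consistent. Let $\Delta$ be the set of all instances of the monotonicity schema $[m]\phi\to[n]\phi$ for $m\le n<\omega$, and set $\Gamma^+ = \Gamma\cup\{[k]\delta : \delta\in\Delta, k<\omega\}\cup\Delta$. More precisely, let $\Gamma^+$ be $\Gamma$ together with all formulae $\boxdot_{\vec k}\,\theta$, where $\theta$ is a theorem of $\glp$ and $\vec k$ ranges over finite sequences of indices (iterated boxes). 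The simplest choice is just $\Gamma\cup\glp$, i.e.\ adding every theorem of $\glp$. Since $\glp$ is closed under necessitation, every $[k]\theta$ with $\theta\in\glp$ is again in $\glp$, so this set already contains all necessitations.

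First, $\Gamma\cup\glp$ is $\mathsf{J}$-consistent. Suppose it were not. Then $\mathsf{J}\vdash\neg\bigwedge(\Gamma_0\cup\Theta_0)$ for finite $\Gamma_0\subset\Gamma$ and $\Theta_0\subset\glp$. Since $\mathsf{J}\subset\glp$, we get $\glp\vdash \bigwedge\Theta_0\to\neg\bigwedge\Gamma_0$, hence $\glp\vdash\neg\bigwedge\Gamma_0$, contradicting the $\glp$-consistency of $\Gamma$. The set $\Gamma\cup\glp$ is countable, since the language is.

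Next, apply Theorem~\ref{thm:bouquet-n} with $n=\omega$. This yields a small $\mathsf{J}$-bouquet $B$ and a valuation $v$ with $B,r^B\Vdash_v\Gamma\cup\glp$. By Lemma~\ref{LemmaJBouquetsAreJSpaces}, $B$ with its polytopology $\vec\tau$ is a $\mathsf{J}$-space. Then $X=B$ and $x=r^B$ give $X,x\Vdash\glp+\Gamma$, where "$\Vdash\glp$" is read pointwise: every theorem of $\glp$, in particular every monotonicity instance, holds at $x$ under $v$.

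The only point needing care, and the likely obstacle, is what "$X,x\Vdash\glp$" is meant to assert. If it means validity of all $\glp$-theorems at $x$ under every valuation, the argument fails, since the bouquet is not a $\glp$-frame. In that case I would stay with the pointwise reading fixed by the valuation, which matches the abstract's phrasing "$B,r\Vdash\glp$". With that reading the argument above is complete and no further work is needed.
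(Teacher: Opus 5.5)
Your proposal is correct and is the argument the paper has in mind when it calls this an immediate corollary. Because $\mathsf{J}\subset\glp$, the set $\Gamma\cup\glp$ is $\mathsf{J}$-consistent, and Theorem~\ref{thm:bouquet-n} with Lemma~\ref{LemmaJBouquetsAreJSpaces} then gives a $\mathsf{J}$-bouquet, hence a $\mathsf{J}$-space, satisfying $\Gamma\cup\glp$ at its root under a fixed valuation; your pointwise reading of $X,x\Vdash\glp$ is the intended one, as the abstract's phrasing $B,r\Vdash\glp$ confirms.
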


\section{Concluding remarks}\label{sec:conclusion}

Beklemishev and Gabelaia \cite{BG13} used the Kripke completeness of $\mathsf{J}$ as an ingredient in their proof of the topological completeness of $\glp$. It is natural to conjecture, in light of the strong completeness result, that $\glp$ is strongly complete with respect to the spaces employed by Beklemishev and Gabelaia, or that the closed fragment of $\glp$ is strongly complete with respect to the spaces introduced by Icard \cite{Ic08}. Recall that the Icard space $\mathcal{I}_n$ on an ordinal $\Theta$ is defined as the topology generated by all intervals
\[(\alpha,\beta]_m = \{x <\Theta: \alpha < \log^m(x) \leq \beta\} \quad \text{ for $m<n$},\]
where the $\log$ function is defined by $\log(\zeta + \omega^\xi) = \xi$. Beklemishev-Gabelaia spaces are rank-preserving extensions of Icard spaces which are maximal with the property that no new neighbourhoods are added around points of successor rank; we refer the reader to \cite{BG13} for more details. The following counterexample has previously appeared in the conference paper \cite{AS24}, but we repeat it for the reader's convenience. Below, we use $\glp^0$ to denote the closed fragment of $\glp$.

\begin{theorem}\label{TheoremCounter}
$\glp^0$ is not strongly complete with respect to any Icard space. $\glp$ is not strongly complete with respect to any Beklemishev-Gabelaia space.
\end{theorem}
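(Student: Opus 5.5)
The plan is to handle the closed fragment first and then derive the statement about Beklemishev--Gabelaia spaces from it. For $\glp^0$ I would exhibit a countable set $\Gamma$ of variable-free formul\ae\ with three properties: every finite subset of $\Gamma$ is $\glp$-consistent, but any point $x$ of any Icard space $\mathcal{I}_\omega$ on any ordinal $\Theta$ with $x\Vdash\Gamma$ would have to satisfy contradictory ordinal constraints. The guiding idea is the same as for the failure of strong Kripke completeness of $\gl$. We force $x$ to sit at a ``limit level'', namely above or at $\epsilon_0$, via the conditions $\langle n\rangle\top$ for all $n$. We then add infinitely many conditions each of which is finitely harmless but which together exclude every candidate ordinal.

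The first key step is to compute the truth sets of the relevant closed formul\ae\ in $\mathcal{I}_\omega$. From the definition of the intervals $(\alpha,\beta]_m$ one checks that $x\Vdash\langle m\rangle\top$ iff $\log^m(x)$ is a limit ordinal (in particular nonzero). More generally, $x\Vdash\langle m\rangle\phi$ iff $x$ is a limit of points $y<x$ whose $\log^m$ approaches $\log^m(x)$ from below and which satisfy $\phi$. Consequently $x\Vdash\{\langle n\rangle\top:n<\omega\}$ forces $\log^n(x)\ge 1$ for all $n$, i.e.\ $x\ge\epsilon_0$ and in fact $\log^n(x)\ge\epsilon_0$. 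The candidate $\Gamma$ is then
\[
\Gamma=\{\langle n\rangle\top:n<\omega\}\cup\{\neg\langle 0\rangle\chi_n : n<\omega\},
\]
where $\chi_n$ are closed formul\ae\ of the form $\langle n\rangle\top\wedge[n+1]\bot$ (``points of exact $n$-level''), possibly adjusted with extra modalities. The $\chi_n$ are to be chosen so that any point with $\log^n(x)\ge\epsilon_0$ for all $n$ must be a $0$-limit of $\chi_n$-points for some $n$.

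The second step is finite consistency. For a finite $\Delta\subset\Gamma$, only finitely many $\langle n\rangle\top$, say up to $N$, and finitely many $\neg\langle 0\rangle\chi_k$ occur. I would realize these in Ignatiev's frame $\mathfrak{I}$, equivalently in Icard space below $\epsilon_0$, at a point like $\omega_N$-towers or $\epsilon$-like ordinals of bounded height. At such points the $\chi_k$ with $k$ large are false on the relevant neighbourhood. If the naive $\chi_n$ turn out to be forced by $\langle N\rangle\top$ through the reflection principles of $\glp$, I would instead take $\chi_n$ to be the worm normal forms $\langle n\rangle\langle n-1\rangle\cdots\langle 0\rangle\top$ conjoined with a $[\,\cdot\,]\bot$. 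I would then verify consistency through the known decision procedure for $\glp^0$ by worm orderings.

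The third step transfers the result to Beklemishev--Gabelaia spaces. These are rank-preserving extensions of Icard spaces, and the truth value of a closed formula at a point depends only on its vector of ranks $(\rho_{\tau_m}(x))_m$. Hence the same $\Gamma$ is unsatisfiable in any BG-space, and it is $\glp$-consistent, so $\glp$ is not strongly complete for them. The main obstacle I expect is calibrating the $\chi_n$. They must be weak enough that $\glp\nvdash\neg\bigwedge\Delta$ for every finite $\Delta$, yet strong enough that the unique Icard-candidate point, essentially $\epsilon_0$ or points above it, satisfies $\langle 0\rangle\chi_n$ for some $n$. I would first attempt this with explicit worms and the rank computation at $\epsilon_0$, using that every neighbourhood $(\alpha,\epsilon_0]_0$ contains ordinals of every finite $\log$-height.
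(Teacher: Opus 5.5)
Your approach has a fatal gap: calibrating the $\chi_n$, which you name as the main obstacle, is impossible. Already your naive choice is inconsistent. L\"ob's axiom gives $\tup{n+1}\top\to\tup{n+1}[n+1]\bot$. Monotonicity together with the axiom $\tup{n}\phi\to[n+1]\tup{n}\phi$ gives $\tup{n+1}\top\to[n+1]\tup{n}\top$. Hence $\glp\vdash\tup{n+1}\top\to\tup{n+1}\chi_n\to\tup{0}\chi_n$.

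No other choice of closed $\chi_n$ helps either. By induction, $\tup{m+1}\top\wedge\tup{m}^j\top\to\tup{m+1}\tup{m}^j\top\to\tup{m}^{j+1}\top$; with monotonicity inside worms, $\tup{m+1}\top$ proves every worm with entries $\le m$. So $\{\tup{n}\top:n<\omega\}$ proves every worm. Every closed formula is $\glp$-equivalent to a Boolean combination of worms (Ignatiev's normal form), so this set decides every closed formula. Its theory is therefore exactly the closed theory of the point $\epsilon_0$ in any Icard space on $\Theta>\epsilon_0$: there $\log^n(\epsilon_0)=\epsilon_0$ is a limit, so $\epsilon_0\Vdash\tup{n}\top$ for all $n$, and the space validates $\glp$.

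Consequently every consistent closed $\Gamma\supseteq\{\tup{n}\top:n<\omega\}$ holds at $\epsilon_0$. Whenever $\epsilon_0\Vdash\tup{0}\chi$, you already have $\glp\vdash\tup{N}\top\to\tup{0}\chi$ for some $N$. The same applies to BG-spaces, since they are rank-preserving and validate $\glp$. Pushing the point up to $\epsilon_0$ thus only refutes spaces on $\Theta\le\epsilon_0$, where $\{\tup{n}\top:n<\omega\}$ alone already suffices. The theorem, however, concerns every $\Theta$.

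The missing idea is to run the $\gl$-style argument at the bottom of the hierarchy. Demand unbounded $0$-rank, and use modality $1$ to keep $\log$ of the point finite. The paper takes $\Gamma=\{\tup{0}^k\top:k<\omega\}\cup\{[1]\bot,[0][1]\bot\}$. Each finite part holds at $\omega^k$ in $[0,\omega^k]$, with $\tau_0$ the order topology and every $\tau_i$ for $i\ge 1$ discrete. This is a $\glp$-space in which $1$-isolation puts no bound on $0$-rank.

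In an Icard space, write $\alpha=\beta+\omega^\gamma$. The formula $[0][1]\bot$ rules out $\gamma>\omega$, because the points $\delta+\omega^\omega<\alpha$ accumulate at $\alpha$ and satisfy $\tup{1}\top$. The formula $[1]\bot$ forces $\log^2(\alpha)=0$, which rules out $\gamma=\omega$. So $\gamma=k<\omega$ and $\alpha\Vdash[0]^{k+1}\bot$, contradicting $\tup{0}^{k+1}\top\in\Gamma$.

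The BG case uses only that $\tau_0$ is still the order topology and that ranks are preserved. So your unproved general claim that closed formulas depend only on rank vectors is not needed.
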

\begin{proof}
We let 
\[\Gamma=\set{\tup{0}^k \top : k<\omega}\cup\set{[1]\bot}\cup\set{[0][1]\bot}.\]
This set is consistent with $\glp$. To see this, note that each finite subset is valid in a space of the form $([0,\omega^k], \vec\tau)$, where $\tau_0$ is the order topology and each further $\tau_i$ is discrete.

Suppose towards a contradiction that $\alpha\Vdash \Gamma$ for some ordinal $\alpha$ in some Icard space (the proof for Beklemishev-Gabelaia spaces is the same).
Write $\alpha$ in the form
\[\alpha = \beta + \omega^{\gamma}.\]
Since $\alpha \Vdash [0][1] \bot$, it follows that all ordinals $\alpha'$ sufficiently close to $\alpha$ satisfy $\log^2(\alpha') = 0$, which implies $\gamma = \log(\alpha) \leq \omega$.
Since $\alpha\Vdash[1]\bot$, we must have $\log^2(\alpha) = 0$,  so $\gamma$ is finite. Hence, $\alpha$ is of the form $\beta + \omega^k$ for some $k\in\mathbb{N}$ and thus satisfies $[0]^{k+1}\bot$, which is a contradiction.
\end{proof}

\end{document}